 \theoremstyle{plain}
\newtheorem{thm}{Theorem}[section]
  \theoremstyle{plain}
  \newtheorem*{thm*}{Theorem}
  \theoremstyle{plain}
  \theoremstyle{remark}
  \newtheorem{rem}[thm]{Remark}
  \theoremstyle{definition}
 \theoremstyle{definition}
 \newtheorem*{defn*}{Definition}
  \theoremstyle{plain}
 \theoremstyle{definition}
  \theoremstyle{remark}
\newenvironment{keywords}{ \noindent\footnotesize\textbf{Keywords and phrases:}}{}
\newenvironment{class}{\noindent\footnotesize\textbf{Mathematics subject classification 2010:}}{}
\newcommand{\Abs}[1]{\left\lVert#1\right\rVert}
\newcommand*{\trace}{\operatorname{trace}}
\newcommand*{\curl}{\operatorname{curl}}
\newcommand{\R}{\mathbb{R}}
\newcommand*{\Grad}{\operatorname{Grad}}
\newcommand*{\abs}[1]{\lvert#1\rvert}
\newcommand*{\grad}{\operatorname{grad}}
\DeclareMathAccent{\Circ}{\mathalpha}{operators}{"17}
\renewcommand{\Re}{\operatorname{\mathfrak{Re}}}
\theoremstyle{plain}
\newtheorem{Sa}[subsubsection]{Theorem}
\newtheorem{Sa*}[subsection]{Theorem}
\newtheorem{Le}[subsubsection]{Lemma}
\newtheorem{Le*}[section]{Lemma}
\newtheorem{Fo}[subsubsection]{Corollary}
\newtheorem{Fo*}[subsection]{Corollary}
\newtheorem{Prop*}[section]{Proposition}
\theoremstyle{definition}
\newtheorem*{Def}{Definition}
\theoremstyle{remark}
\newtheorem{rems}[subsubsection]{Remarks}
 \numberwithin{equation}{section}
\DeclareMathOperator{\TextRe}{Re}
\renewcommand{\Re}{\TextRe}
\DeclareMathOperator{\Lin}{Lin}
\newcommand{\N}{\mathbb{N}}
\newcommand{\eps}{\varepsilon}
\newcommand{\dd}{\ \mathrm{d}}
\DeclareMathOperator{\Diverg}{Div}
\DeclareMathOperator{\diverg}{div}
\newcommand{\ben}{\begin{enumerate}[(i)]}
\newcommand{\een}{\end{enumerate}}
\renewcommand{\tilde}{\widetilde}
\renewcommand*{\epsilon}{\varepsilon}
\renewcommand*{\rho}{\varrho}
\begin{document}
\selectlanguage{english}%
\institut{Institut f\"ur Analysis}

\preprintnumber{MATH-AN-03-2011}

\preprinttitle{A note on elliptic type boundary value problems with maximal monotone relations.}

\author{Sascha Trostorff, Marcus Waurick}

\makepreprinttitlepage

\selectlanguage{american}%


\title{A note on elliptic type boundary value problems with maximal monotone relations.}

\author{Sascha Trostorff \\ Marcus Waurick\\
 Institut f\"ur Analysis, Fachrichtung Mathematik\\
 Technische Universit\"at Dresden\\
 Germany\\
sascha.trostorff@tu-dresden.de\\
 marcus.waurick@tu-dresden.de }
\maketitle
\begin{abstract}
In this note we discuss an abstract framework
for standard boundary value problems in divergence form with maximal
monotone relations as {}``coefficients''. A reformulation of the respective
problems is constructed such that they turn out to be unitary equivalent
to inverting a maximal monotone relation in a Hilbert space. The method
is based on the idea of {}``tailor-made'' distributions
as provided by the construction of extrapolation spaces, see e.g.
[Picard, McGhee: \emph{Partial Differential Equations: A unified
Hilbert Space Approach}. DeGruyter, 2011]. The abstract
framework is illustrated by various examples.
\end{abstract}

\begin{keywords} Elliptic Differential Equations in Divergence form; maximal monotone relations; Gelfand triples \end{keywords}

\begin{class} 35J60, 35J15, 47H04, 47H05\end{class}
\newpage
\tableofcontents{}

\newpage
\section{Introduction}

In mathematical physics elliptic type problems play an important
role, in analyzing various equilibria as for example in potential
theory, in stationary elasticity and many other types of stationary
boundary value problems. Classical monographs, focusing
mainly on linear problems, are for instance \cite{agmon2010lectures,Courant1962,Evans2010,gilbarg2001elliptic}.
We also refer to \cite[Chapter VIII]{Showalter1994} for a survey
of the literature. Also non-linear elliptic type problems have been
studied intensively. The authors of \cite{Amann1979,Amann1998} study
non-linear perturbations of a selfadjoint operator and obtain existence of a solution. Later-on,
uniqueness results could be proved, see \cite{Amann1972,Serrin1972}.
Operators in divergence form with non-linear coefficients are studied
in \cite{Asakawa1987,Browder1963,Browder1965,Browder1966,Visik1963},
where some monotonicity condition is imposed on the coefficients to
obtain existence. This monotonicity condition might also be very weak,
cf. \cite{Hungerbuhler1999}. The case of divergence form operators
with multi-valued coefficients is treated among other things in \cite{CDC},
where also existence results could be obtained. In this note, we restrict
ourselves to the Hilbert space setting and study conditions under
which abstract divergence form operators with possibly multi-valued
coefficients lead to well-posed operator inclusions. The restriction
to the Hilbert space case enables us to show continuity estimates
also for inhomogeneous boundary value problems of elliptic type, cf.
the Corollaries \ref{cor: cont_estimate} and \ref{cor: cont_est_vn},
where -- to the best of the authors' knowledge -- the first
one is new. The main topic is the discussion of the structure of
the following type of problem: Let $H_{1}$, $H_{2}$ and $G_{1}$,
$G_{2}$ be Hilbert spaces and let $f\in G_{1}$ be given. Moreover,
let $a\subseteq H_{2}\oplus H_{2}$ be a relation such that $a^{-1}:H_2\to H_2$ becomes a Lipschitz-continuous mapping (the main focus will be laid on $c$-maximal monotone relations, which will be defined below),
$A:D(A)\subseteq H_{1}\to H_{2}$ densely defined closed linear. We
study the problem of finding $u\in G_{2}\subseteq D(A)$ such that
the inclusion 
\begin{equation}
\tag{\ensuremath{*}}A^{*}aA \ni (u,f),\label{inc:abstract}
\end{equation}
 holds true, i.e. there exists $w\in H_2$ such that
\begin{equation*}
 (Au,w)\in a \mbox { and } A^\ast w=f.
\end{equation*}
We want to find the {}``largest'' space $G_{1}$ to
allow for existence results and the {}``largest'' space $G_{2}$
to yield uniqueness. Endowing $G_{1}$ and $G_{2}$ with suitable
topologies, we seek a solution theory for these type of inclusions.
We will give a framework in order to cover inhomogeneous boundary
value problems with Dirichlet or Neumann boundary data. Compatibility
conditions such as in \cite[Theorem 4.22]{CioDon} arise naturally
in our approach, cf. also Remark \ref{rem:weak_solution}. 


%

Our approach consists in rewriting \eqref{inc:abstract} as an inclusion in ``tailor-made''
distributions spaces by introducing suitable extrapolation
spaces, which are also known as Sobolev chains or Sobolev
towers, see e.g. \cite{EngNag,PicSoLa} and the references therein.
The core idea is to generalize extrapolation spaces to the non-selfadjoint
operator case. This was also done and extensively used in \cite{Picard}
for studying time-dependent problems. Using this extrapolation spaces the abstract problem (\ref{inc:abstract}) turns out to be unitary equivalent to the problem of inverting the relation $a$ in a suitable space. 
Since elliptic type problems are not well-posed in general, one has
to develop a suitable framework in order to determine possible right-hand
sides. We discuss some preliminary facts in Section \ref{PSC} used
in Section \ref{AT}, which are particularly needed for the Theorems
\ref{Thm: STHE}, \ref{pro:inhom_Dirichlet}, \ref{pro:inhom_Neumann}
and the Corollaries \ref{cor: cont_estimate}, \ref{cor: cont_est_vn}.
These theorems and corollaries are the main results of this paper.
We discuss extrapolation spaces in Section \ref{OTF}. Section \ref{MMR}
contains some results in the theory of maximal monotone relations. Most importantly, the following problem is discussed: When is a composition of a orthogonal
projection with a maximal monotone relation again maximal monotone?
This question was also addressed in \cite{Asakawa1987,Garcia2006,Pennanen2003,Robinson1999}.
Particularly in \cite{Robinson1999}, this question was, at least for
our purposes, satisfactorily answered. For easy reference, we also
state some well-known results in the theory of maximal monotone relations
in Section \ref{MMR}. In Section \ref{AT} we apply the results of
the previous ones to give an abstract solution theory for both homogeneous
and inhomogeneous boundary value problems of elliptic type. In Section \ref{ex} we will give some examples, how the  abstract theory could be employed to study boundary value problems in potential theory, stationary elasticity and magneto- and electro-statics.

The underlying scalar field of any vector space discussed here is
the field of complex numbers and the scalar product of any Hilbert
space in this paper is anti-linear in the first component.

\section{Functional analytic preliminaries}\label{PSC}

\subsection{Operator-theoretic framework}\label{OTF}
We recall some definitions from operator theory. As general references we refer to \cite{Kato1976,Picard}.
\begin{Def}[modulus of $A$, cf. {\cite[VI 2.7]{Kato1976}}] Let $H_1,H_2$ be Hilbert spaces. Let $A: D(A)\subseteq 
H_1\to H_2$ be a densely defined closed linear operator. The operator $A^*A$ is non-negative and selfadjoint in
$H_1$. We define $\abs{A}:=\sqrt{A^*A}$ the \emph{modulus of $A$}. It holds $D(\abs{A})= D(A)$.
\end{Def}

The following notion of extrapolation spaces and extrapolation operators can be found in \cite{PicSoLa,Picard}. See in particular \cite{PicSoLa}, where a historical background is provided.
\begin{Def}[extrapolation spaces, Sobolev chain] Let $H$ be a Hilbert space. Let $C
: D(C)\subseteq  H \to H$ be a densely defined closed linear
operator and such that $0$ is contained in the resolvent set of $C$.
Define $H_1(C)$ to be the Hilbert space $D(C)$ endowed with the norm
$\abs{C\cdot}_H$. Define $H_0(C):= H$ and let $H_{-1}(C)$ be the
completion of $H_0(C)$ with respect to the norm
$\abs{C^{-1}\cdot}_H$. The triple $(H_{1}(C),H_0(C),H_{-1}(C))$ is called \emph{(short) Sobolev chain}.
\end{Def}

\begin{rems}\label{rem:ExDua}\begin{enumerate}[(a)]
  \item\label{rem: Extra_op} It can be shown that $C: H_1(C)\to H_0(C)$ is unitary. Moreover, the operator $C:H_1(C)\subseteq  H_0(C)\to H_{-1}(C)$ has a unique unitary extension, cf. \cite[Theorem 2.1.6]{Picard}.
  \item\label{rem:duality} Sometimes it is useful to identify $H_{-1}(C)$ with $H_{1}(C^\ast)'$, the dual space of $H_{1}(C^\ast)$ (cf. \cite[Theorem 2.2.8]{Picard}). This can be done by the following unitary mapping
\begin{align*}
 V: H_{1}(C^\ast)' &\to H_{-1}(C) \\
    \psi &\mapsto C R_H (H \ni u \mapsto \psi((C^\ast)^{-1}u)),
\end{align*}
where $R_H: H'\to H$ denotes the Riesz-mapping of $H$. Its inverse is given by
\begin{align*}
 V^\ast :H_{-1}(C) &\to  H_{1}(C^\ast)' \\
      u &\mapsto (H_1(C^\ast)\ni v \mapsto \langle C^{-1}u , C^\ast v\rangle_H ).
\end{align*}
By this unitary mapping we can identify $Cx\in H_{-1}(C)$ for $x\in H$ with the functional
\begin{equation*}
 \langle Cx,y\rangle_{H_{-1}(C)\times H_1(C^\ast)}=\langle x,C^\ast y\rangle_H.
\end{equation*}
\end{enumerate}
\end{rems}

We apply the above to the following particular situation. It should be noted that at least for selfadjoint operators a similar strategy has been presented in \cite{Amann1979}. Let $H_1, H_2$ be Hilbert spaces and let $A: D(A)\subseteq  H_1 \to H_2$ be a densely defined closed linear operator such that the range of $A$, $R(A)$, is closed in $H_2$. Recall that $R(A)=N(A^*)^\bot$ and $\overline{R(A^*)}=N(A)^\bot$. The main idea of formulating elliptic type problems is to use the Sobolev chain of the modulus of 
\[
   B : D(A)\cap N(A)^\bot \subseteq  N(A)^\bot \to R(A):\phi\mapsto A\phi
\]
and the modulus of the respective adjoint. 

\begin{Le}\label{Le: Adjoint} The following holds
\[
   B^* : D(A^*)\cap N(A^*)^\bot\subseteq  N(A^*)^\bot \to
   N(A)^{\bot}: \phi\mapsto A^*\phi.
\]
\end{Le}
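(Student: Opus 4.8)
The plan is to compute the Hilbert space adjoint of $B$ directly from its defining relation, using the orthogonal splitting of $H_1$ along $N(A)$. Observe first that, since $R(A)$ is closed, the codomain of $B$ is the Hilbert space $R(A)=N(A^*)^\bot$, so that $B^*$ maps $N(A^*)^\bot$ into $N(A)^\bot$, which already matches the asserted form. Before forming $B^*$ I would verify that $B$ is densely defined in $N(A)^\bot$: denoting by $P$ the orthogonal projection of $H_1$ onto $N(A)^\bot$, and using that $N(A)\subseteq D(A)$ together with the density of $D(A)$ in $H_1$, for $v\in N(A)^\bot$ and $\phi_n\in D(A)$ with $\phi_n\to v$ one has $P\phi_n=\phi_n-(I-P)\phi_n\in D(A)\cap N(A)^\bot=D(B)$ and $P\phi_n\to Pv=v$. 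With $B$ densely defined, $v\in D(B^*)$ with $B^*v=w$ is characterised by $\langle A\phi,v\rangle_{H_2}=\langle\phi,w\rangle_{H_1}$ for all $\phi\in D(A)\cap N(A)^\bot$.

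For the inclusion $D(A^*)\cap N(A^*)^\bot\subseteq D(B^*)$, I would simply take $v\in D(A^*)\cap N(A^*)^\bot$ and note that for every $\phi\in D(B)$ the identity $\langle A\phi,v\rangle_{H_2}=\langle\phi,A^*v\rangle_{H_1}$ holds by definition of $A^*$. Since $A^*v\in R(A^*)\subseteq\overline{R(A^*)}=N(A)^\bot$, the element $A^*v$ lies in the correct codomain, so $v\in D(B^*)$ and $B^*v=A^*v$. This shows that $B^*$ coincides with $\phi\mapsto A^*\phi$ on $D(A^*)\cap N(A^*)^\bot$.

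The converse inclusion $D(B^*)\subseteq D(A^*)\cap N(A^*)^\bot$ is the step I expect to require the most care, because the defining relation for $B^*$ only quantifies over $\phi\in D(A)\cap N(A)^\bot$, whereas membership in $D(A^*)$ demands the identity for all $\phi\in D(A)$. To bridge this gap I would, given $v\in D(B^*)$ with $B^*v=w\in N(A)^\bot$, decompose an arbitrary $\phi\in D(A)$ as $\phi=\phi_0+\phi_1$ with $\phi_0=(I-P)\phi\in N(A)$ and $\phi_1=P\phi$. Here $\phi_0\in N(A)\subseteq D(A)$ forces $\phi_1=\phi-\phi_0\in D(A)\cap N(A)^\bot=D(B)$, so that $A\phi=A\phi_1$ and, applying the $B^*$-relation to $\phi_1$, $\langle A\phi,v\rangle_{H_2}=\langle\phi_1,w\rangle_{H_1}$. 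Finally $w\in N(A)^\bot$ and $\phi_0\in N(A)$ give $\langle\phi_0,w\rangle_{H_1}=0$, whence $\langle\phi_1,w\rangle_{H_1}=\langle\phi,w\rangle_{H_1}$; thus $\langle A\phi,v\rangle_{H_2}=\langle\phi,w\rangle_{H_1}$ for all $\phi\in D(A)$, i.e.\ $v\in D(A^*)$ and $A^*v=w$. Since $D(B^*)\subseteq R(A)=N(A^*)^\bot$, we also have $v\in N(A^*)^\bot$, completing the identification. The two crucial structural facts driving the argument are $N(A)\subseteq D(A)$ and $w\perp N(A)$, which together let the restricted test condition be upgraded to the full one.
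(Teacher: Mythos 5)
Your proof is correct and follows essentially the same route as the paper, which establishes the claim by the chain of equivalences $(u,v)\in B^*\iff \forall\phi\in D(A)\cap N(A)^\bot:\langle A\phi,u\rangle=\langle\phi,v\rangle\iff\forall\phi\in D(A):\langle A\phi,u\rangle=\langle\phi,v\rangle\iff(u,v)\in A^*$. You merely unfold the middle equivalence that the paper leaves implicit --- the decomposition $\phi=\phi_0+\phi_1$ with $\phi_0\in N(A)\subseteq D(A)$, $A\phi_0=0$ and $v\perp N(A)$ --- and additionally verify the density of $D(B)$ in $N(A)^\bot$, which the paper takes for granted.
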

\begin{proof} Let $(u,v)\in R(A)\oplus N(A)^\bot$. Then we have\footnote{Occasionally, we will identify an operator $B$ with its graph, i.e., $B = \{(x,Bx); x\in D(B)\}$.} 
\begin{align*}
   (u,v)\in B^* &\iff \forall \phi \in D(B): \langle B\phi, u\rangle
   = \langle \phi , v \rangle \\
   & \iff \forall \phi \in D(A)\cap N(A)^\bot : \langle A\phi,u
   \rangle = \langle \phi, v \rangle \\
   & \iff \forall \phi \in D(A) : \langle A\phi,u
   \rangle = \langle \phi, v \rangle \\
   & \iff (u,v)\in A^*.\qedhere	
\end{align*}
\end{proof}

We note that since $R(A)$ is closed, the operator $B^{-1}$ is continuous by the closed graph theorem. We may show a similar property for $B^*$.

\begin{Fo} It holds $(B^*)^{-1} = (B^{-1})^*$ and
$\overline{R(A^*)}=R(A^*)$.
\end{Fo}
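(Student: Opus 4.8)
The plan is to exploit that $B$ is a bijection of $D(B)=D(A)\cap N(A)^\bot$ onto $R(A)$ with bounded, everywhere-defined inverse, and then transfer this to $B^*$ by means of the graph identity $(B^*)^{-1}=(B^{-1})^*$, from which the closedness of $R(A^*)$ drops out.

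First I would record the structural properties of $B$. It is injective, since $B\phi=A\phi=0$ forces $\phi\in N(A)\cap N(A)^\bot=\{0\}$, and it is densely defined in $N(A)^\bot$: writing $P$ for the orthogonal projection onto $N(A)^\bot$ and using $N(A)\subseteq D(A)$, one has $P\phi=\phi-(1-P)\phi\in D(A)\cap N(A)^\bot$ for every $\phi\in D(A)$, so $P(D(A))\subseteq D(B)$ is dense in $N(A)^\bot$. The same decomposition shows $B$ is onto $R(A)$: given $v=A\psi\in R(A)$, the vector $P\psi=\psi-(1-P)\psi$ lies in $D(A)\cap N(A)^\bot$ and satisfies $B(P\psi)=A\psi=v$. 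Since $B$ is moreover closed (being the restriction of the closed operator $A$ to the closed subspace $N(A)^\bot$, with the closed range $R(A)$ as target), $B^{-1}$ is everywhere defined on $R(A)$ and bounded, as already noted.

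The key step is the adjoint identity $(B^{-1})^*=(B^*)^{-1}$, which I would prove by a direct graph computation. For $u\in N(A)^\bot$ and $v\in R(A)$ one has $(u,v)\in(B^{-1})^*$ iff $\langle B^{-1}\psi,u\rangle=\langle\psi,v\rangle$ for all $\psi\in R(A)=D(B^{-1})$; substituting $\psi=B\phi$ with $\phi$ ranging over $D(B)$ (here surjectivity of $B$ is used, so that the substitution exhausts $D(B^{-1})$) this condition reads $\langle\phi,u\rangle=\langle B\phi,v\rangle$ for all $\phi\in D(B)$, which is precisely the statement $(v,u)\in B^*$, i.e. $(u,v)\in(B^*)^{-1}$. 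This establishes the first assertion. Because $B^{-1}$ is bounded and everywhere defined, so is $(B^{-1})^*$, whence $(B^*)^{-1}$ is bounded with full domain $N(A)^\bot$; in particular $R(B^*)=D((B^*)^{-1})=N(A)^\bot$.

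It remains to identify $R(B^*)$ with $R(A^*)$. By Lemma \ref{Le: Adjoint} one has $B^*\phi=A^*\phi$, so $R(B^*)\subseteq R(A^*)$. Conversely, for $v=A^*\psi\in R(A^*)$ the orthogonal decomposition of $\psi$ along $N(A^*)\subseteq D(A^*)$ produces, exactly as above, a vector in $D(A^*)\cap N(A^*)^\bot$ that $A^*$ maps to $v$, so $v\in R(B^*)$; thus $R(B^*)=R(A^*)$. Combining this with the previous paragraph and the identity $\overline{R(A^*)}=N(A)^\bot$ recorded before the lemma gives $R(A^*)=N(A)^\bot=\overline{R(A^*)}$, which is the claimed closedness. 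I expect the only delicate point to be the bookkeeping in the graph computation — in particular verifying that $B$ is densely defined and surjective, so that $B^*$ and $B^{-1}$ are genuine single-valued, fully defined operators and the substitution $\psi=B\phi$ ranges over all of $D(B^{-1})$; once these are in place the identity itself is immediate.
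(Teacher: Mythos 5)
Your proof is correct and follows essentially the same route as the paper: the paper declares the graph identity $(B^*)^{-1}=(B^{-1})^*$ ``clear'', deduces that $(B^*)^{-1}$ is continuous with full domain, and concludes via the chain $\overline{R(A^*)}=N(A)^\bot=D((B^*)^{-1})=R(B^*)=R(A^*)$ --- exactly your argument, with your graph computation, the structural properties of $B$, and the identification $R(B^*)=R(A^*)$ merely filling in the details the paper leaves implicit. Your only (harmless) surplus is proving the inclusion $R(A^*)\subseteq R(B^*)$ by decomposition, which the paper's chain of equalities renders unnecessary since $B^*\subseteq A^*$ already gives $R(B^*)\subseteq R(A^*)\subseteq N(A)^\bot=R(B^*)$.
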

\begin{proof} The first equality is clear. Moreover, we deduce that
$(B^*)^{-1}$ is continuous and closed. Hence, $\overline{R(A^*)}=N(A)^\bot= D((B^*)^{-1})= R(B^*) = R(A^*)$.
\end{proof}

\begin{Sa}\label{Le: unitary} The operators $\abs{B}$ and $\abs{B^*}$ are continuously
invertible. Moreover, the operator
\begin{equation*}
   B: H_1(\abs{B}) \to H_0(\abs{B^*})
\end{equation*}
is unitary and the operator
\begin{equation*}
   B^\ast : H_1(\abs{B^*})\subseteq H_0(\abs{B^*}) \to H_{-1}(\abs{B})
\end{equation*}
can be extended to a unitary operator from $H_0(\abs{B^\ast})$ to $H_{-1}(\abs{B})$.
\end{Sa}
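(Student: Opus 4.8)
The plan is to reduce all three assertions to two inputs that are essentially already at hand: the continuity of $B^{-1}$ and of $(B^*)^{-1}$ (the first noted just before the statement, the second being the content of the preceding Corollary, both via the closed graph theorem since $R(A)$ and $R(A^*)=N(A)^\perp$ are closed and $B$, $B^*$ are injective on $N(A)^\perp$, $N(A^*)^\perp$ respectively), together with the polar-decomposition identities $\abs{\abs{B}x}=\abs{Bx}$ for $x\in D(\abs{B})=D(B)$ and $\abs{\abs{B^*}y}=\abs{B^*y}$ for $y\in D(\abs{B^*})=D(B^*)$.

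For the continuous invertibility, I would argue as follows. Continuity of $B^{-1}$ gives a constant $c>0$ with $\abs{Bx}\ge c\abs{x}$ for all $x\in D(B)$, and combined with $\abs{\abs{B}x}=\abs{Bx}$ this shows $\abs{B}$ is bounded below. Since $\abs{B}$ is selfadjoint and injective (because $N(\abs{B})=N(B)=N(A)\cap N(A)^\perp=\{0\}$), being bounded below forces $R(\abs{B})$ to be closed, while $R(\abs{B})^{\perp}=N(\abs{B})=\{0\}$ makes it dense; hence $0\in\rho(\abs{B})$. The identical argument for $B^*$, now using continuity of $(B^*)^{-1}$, yields continuous invertibility of $\abs{B^*}$.

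The unitarity of $B\colon H_1(\abs{B})\to H_0(\abs{B^*})$ is then the easiest point: by definition the inner product on $H_1(\abs{B})$ is $\langle x,y\rangle_{H_1(\abs{B})}=\langle \abs{B}x,\abs{B}y\rangle=\langle B^*Bx,y\rangle=\langle Bx,By\rangle$, so $B$ preserves inner products into $H_0(\abs{B^*})=R(A)$, and it is surjective since $R(B)=R(A)=H_0(\abs{B^*})$; a surjective inner-product-preserving map is unitary. Applying this same statement to $B^*$ in place of $B$ (legitimate by the symmetry of the hypotheses, using $(B^*)^*=B$) also shows $B^*\colon H_1(\abs{B^*})\to H_0(\abs{B})$ is unitary, which I will reuse below.

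The delicate assertion is the extension of $B^*\colon H_1(\abs{B^*})\subseteq H_0(\abs{B^*})\to H_{-1}(\abs{B})$. Here $B^*y\in N(A)^\perp=H_0(\abs{B})\subseteq H_{-1}(\abs{B})$ with $\abs{B^*y}_{H_{-1}(\abs{B})}=\abs{\abs{B}^{-1}B^*y}$, and the crucial computation is $\abs{\abs{B}^{-1}B^*y}=\abs{y}$ for $y\in D(B^*)$: writing the polar decomposition $B=U\abs{B}$ with $U\colon N(A)^\perp\to R(A)$ \emph{unitary} (unitary precisely because both $B$ and $B^*$ have full closed range), one gets $B^*=\abs{B}U^*$ and $\abs{B^*}=U\abs{B}U^*$, whence $\abs{B}^{-1}B^*y=U^*y$ and the desired isometry. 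Since $D(B^*)$ is dense in $H_0(\abs{B^*})$ and its image $R(B^*)=N(A)^\perp=H_0(\abs{B})$ is dense in $H_{-1}(\abs{B})$, this isometry extends to a surjective isometry, i.e.\ a unitary. The main obstacle is exactly the bookkeeping of Sobolev levels: one must keep track that passing from the unitary $B^*\colon H_1(\abs{B^*})\to H_0(\abs{B})$ to $B^*\colon H_0(\abs{B^*})\to H_{-1}(\abs{B})$ is a genuine downward shift handled by the extrapolation construction of Remark~\ref{rem:ExDua}\eqref{rem: Extra_op}; equivalently, one may define the extension outright as the composition of unitaries $\widetilde{\abs{B}}\circ B^*\circ\abs{B^*}^{-1}\colon H_0(\abs{B^*})\to H_{-1}(\abs{B})$ and use the same polar decomposition to check that it agrees with $B^*$ on $H_1(\abs{B^*})$.
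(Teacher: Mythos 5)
Your proposal is correct and follows essentially the same route as the paper's proof: continuous invertibility of $\abs{B}$ and $\abs{B^*}$ deduced from that of $B$ and $B^*$, unitarity of $B:H_1(\abs{B})\to H_0(\abs{B^*})$ from the polar identity $\abs{\abs{B}\phi}=\abs{B\phi}$ together with surjectivity onto $R(A)$, and the unitary extension of $B^*$ obtained from the isometry $\abs{\abs{B}^{-1}B^*y}_{H_0(\abs{B})}=\abs{y}$ on the dense subspace $H_1(\abs{B^*})\subseteq H_0(\abs{B^*})$ whose image $R(B^*)=H_0(\abs{B})$ is dense in $H_{-1}(\abs{B})$. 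The only cosmetic differences are that you establish invertibility of $\abs{B}$ via a bounded-below argument for a selfadjoint injective operator rather than via the spectral theorem applied to the invertible $B^*B$, and that you derive the key transmutation identity $\abs{B}^{-1}B^*=B^*\abs{B^*}^{-1}=U^*$ explicitly from the polar decomposition $B=U\abs{B}$, where the paper simply cites it from \cite[Lemma 2.1.16]{Picard}.
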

\begin{proof}
 As $B$ and $B^*$ are continuously invertible, so is $B^*B$. Thus, the spectral theorem for self-adjoint operators implies the continuous invertibility of $\abs{B}$. Interchanging the roles of
 $B$ and $B^*$, we get the continuous invertibility of $\abs{B^*}$.
 Now, let $\phi\in H_1(\abs{B})$. Then we have
 \[
    \abs{B\phi}_{H_0(\abs{B^*})} =
    \abs{\abs{B}\phi}_{H_0(\abs{B^*})}=\abs{\phi}_{H_1(\abs{B})}.
 \]
 Since $H_0(\abs{B^\ast})=R(A)$ the operator $B$ is clearly onto and hence unitary. Now, for $B^*$ it
 suffices to show that the norm is preserved for $\phi\in
 H_1(\abs{B^*})$. Let $\phi\in H_1(\abs{B^*})$. 
Using \cite[Lemma 2.1.16]{Picard} for the transmutation relation $\abs{B}^{-1}B^*\phi = B^*\abs{B^*}^{-1}\phi$, we conclude that
 \[
    \abs{B^*\phi}_{H_{-1}(\abs{B})}=
    \abs{\abs{B}^{-1}B^*\phi}_{H_{0}(\abs{B})}=
    \abs{B^*\abs{B^*}^{-1}\phi}_{H_{0}(\abs{B})}=
    \abs{\phi}_{H_0(\abs{B})}.\qedhere
 \]
\end{proof}
\begin{rem}{\label{rem:relation_Sobolev_chains}}
\begin{enumerate}[(a)]
\item We can construct the Sobolev chains of the operators $|A|+i$ and $|A^\ast|+i$, respectively. The operator $A$ can then be established as a bounded linear operator $A:H_k(|A|+i)\to H_{k-1}(|A^\ast|+i)$ for $k\in \{0,1\}$ (cf. \cite[Lemma 2.1.16]{Picard}). In virtue of Remark \ref{rem:ExDua}\eqref{rem:duality}, the element $Ax$ for $x\in H_0(|A|+i)$ can be interpreted as a bounded linear functional on $H_1(|A^\ast|+i)$. If $U$ denotes the partial isometry such that $A=U|A|$ (cf. \cite[VI 2.7, formula (2.23)]{Kato1976}), we compute for $y\in H_1(|A^\ast|+i)$
\begin{align*}
& \langle Ax,y\rangle_{H_{-1}(|A^\ast|+i),H_1(|A^\ast|-i)} \\ &= \langle (|A^\ast|+i)^{-1} Ax, (|A^\ast|-i)y\rangle_{H_2} \\
&=  \langle A (|A|+i)^{-1} x, (|A^\ast|-i)y\rangle_{H_2} \\
&= \langle U|A| (|A|+i)^{-1} x, (|A^\ast|-i)y\rangle_{H_2} \\
&= \langle Ux,(|A^\ast|-i)y\rangle_{H_2}+i\langle U(|A|+i)^{-1}x,(|A^\ast|-i)y\rangle_{H_2} \\
&= \langle x,U^\ast (|A^\ast|-i) y\rangle_{H_1}+i\langle (|A|+i)^{-1}x,(|A|-i)U^\ast y\rangle_{H_1} \\
&= \langle x, U^\ast (|A^\ast|-i)y+iU^\ast y\rangle_{H_1} \\
&= \langle x, A^\ast y\rangle_{H_1}.
\end{align*}
\item  We clearly have $H_1(|B|)=D(B)\subseteq D(A)=H_1(|A|+i)$ and $H_0(|B|)=R(A^\ast)\subseteq H_2=H_0(|A|+i)$. Since $H_{-1}(|B|)$ is defined as the completion of $R(A^\ast)$ with respect to the norm $||B|^{-1}\cdot|$ and since this norm is equivalent to the norm $|(|A|+i)^{-1}\cdot|$, we also get $H_{-1}(|B|)\subseteq H_{-1}(|A|+i)$. Clearly the analogue results hold for the Sobolev chains of $|B^\ast|$ and $|A^\ast|+i$.  
\end{enumerate}
\end{rem}

\subsection{Maximal monotone relations}\label{MMR}

We begin to introduce some notions for the treatment of relations.

\begin{Def}
 For a binary relation $a\subseteq H\oplus H$ and an arbitrary subset $M\subseteq H$ we denote by 
\begin{equation*}
 a[M]\coloneqq \{y\in H\, ; \, \exists x\in M: (x,y)\in a\}
\end{equation*}
the \emph{post-set of} $M$ \emph{ under } $a$ and by 
\begin{equation*}
 [M]a\coloneqq \{x\in H\,;\,\exists y\in M: (x,y)\in a\}
\end{equation*}
the \emph{pre-set of $M$ under $a$}.\\
The relation $a$ is called \emph{monotone} if for all pairs $(u,v),(x,y)\in a$ the following holds
\begin{equation*}
 \Re \langle u-x , v-y\rangle \geq 0,
\end{equation*}
and \emph{maximal monotone}, if for ever monotone relation $b$ with $a\subseteq b$ it follows that $a=b$. \\
Finally we define for a constant $c\in \mathbb{C}$ the relation $a-c\subseteq H\oplus H$ by
\begin{equation*}
 a-c\coloneqq \{(u,v)\in H\oplus H \,;\, (u,v+cu)\in a\}
\end{equation*}
and $a$ is called \emph{$c$-maximal monotone} if $a-c$ is maximal monotone.
\end{Def}

A reason for the treatment of maximal monotone relations as natural generalization of positive semi-definite linear operators is the following theorem. 
\begin{Sa}[{\cite[Theorem 1.3]{Morocsanu1988}}]\label{Thm: Moro} Let $a\subseteq  H\oplus H$ be monotone, $\lambda,c>0$. Then the resolvent $J_\lambda(a):= (1+\lambda a)^{-1}: (1+\lambda a)[H]\subseteq  H\to H$ of $a$ is Lipschitz continuous with $\abs{J_\lambda(a)}_{\textnormal{Lip}}\leq  1$. If in addition $a$ is maximal monotone, then $D(J_\lambda(a))=H$. In particular, if $a-c$ is maximal monotone then $a^{-1}: H\to H$ is Lipschitz continuous with  $\abs{a^{-1}}_{\textnormal{Lip}}\leq  \frac1c$.
\end{Sa}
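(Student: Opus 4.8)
The plan is to prove the three assertions in turn, the middle one being the essential (and hardest) point. Throughout I unwind $1+\lambda a=\{(u,u+\lambda v)\,;\,(u,v)\in a\}$, so that $J_\lambda(a)$ sends $u+\lambda v$ to $u$. For the Lipschitz estimate I take two pairs $(u_1,v_1),(u_2,v_2)\in a$, set $w_i:=u_i+\lambda v_i$, and expand
\[
 \abs{w_1-w_2}^2=\abs{u_1-u_2}^2+2\lambda\,\Re\langle u_1-u_2,v_1-v_2\rangle+\lambda^2\abs{v_1-v_2}^2 .
\]
Discarding the last term (nonnegative) and the cross term (nonnegative by monotonicity, as $\lambda>0$) gives $\abs{w_1-w_2}^2\ge\abs{u_1-u_2}^2$. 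Choosing $w_1=w_2$ forces $u_1=u_2$, so $J_\lambda(a)$ is in fact single-valued; the same inequality is exactly the claimed bound $\abs{J_\lambda(a)}_{\mathrm{Lip}}\le 1$.

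For the second assertion I must show $D(J_\lambda(a))=R(1+\lambda a)=H$. I would first reduce to $\lambda=1$: the scaled relation $\lambda a$ is again maximal monotone (monotonicity is immediate, and maximality transfers because $\Re\langle x_0-u,z_0-\lambda v\rangle\ge0$ for all $(u,v)\in a$ becomes, after dividing by $\lambda>0$, the maximality test for $a$ applied to the pair $(x_0,\tfrac1\lambda z_0)$), and $R(1+\lambda a)=R(1+(\lambda a))$. Hence it suffices to prove $R(1+b)=H$ for an arbitrary maximal monotone $b$, which is Minty's theorem. Here maximality enters through the characterization that a monotone $b$ is maximal iff every pair $(x_0,y_0)$ with $\Re\langle x_0-x,y_0-y\rangle\ge0$ for all $(x,y)\in b$ already lies in $b$ (passing to the limit in this inequality also shows $b$ has closed graph). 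Fixing $w\in H$, solving $w\in u+bu$ is then equivalent to finding $u$ with $\Re\langle u-x,(w-u)-y\rangle\ge0$ for every $(x,y)\in b$.

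This last inequality confines $u$ to the closed ball $K_{x,y}$ whose diametrically opposite points are $x$ and $w-y$, since $\Re\langle u-x,u-(w-y)\rangle\le0$ describes precisely that ball. Thus I need a point common to the family $\{K_{x,y}\}_{(x,y)\in b}$ of closed, bounded, convex — hence weakly compact — sets; by the finite-intersection property it is enough that every finite subfamily meets, and this finite-dimensional statement follows from Brouwer's fixed-point theorem together with the monotonicity of $b$ (the Debrunner--Flor selection lemma). This nonemptiness is the analytic core of the theorem and is where all the work sits, so I expect it to be the main obstacle; alternatively one simply cites the classical Minty theorem. (Once $R(1+b)=H$ is secured for one parameter it propagates to all $\lambda$ via the Banach fixed-point theorem applied to the nonexpansive map $u\mapsto J_1\!\left((1-\tfrac1\lambda)u+\tfrac1\lambda w\right)$, but the base case still needs the ball argument, so nothing is gained.)

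Finally the ``in particular'' clause follows by specializing the first two parts to the maximal monotone relation $a-c$ with $\lambda=\tfrac1c$. Indeed, if $(u,v)\in a$ then $(u,v-cu)\in a-c$, whence
\[
 \left(u,\,u+\tfrac1c(v-cu)\right)=\left(u,\tfrac1c v\right)\in 1+\tfrac1c(a-c),
\]
so $1+\tfrac1c(a-c)=\{(u,\tfrac1c v)\,;\,(u,v)\in a\}$ and its inverse $J_{1/c}(a-c)$ sends $\tfrac1c v$ to $u$. By the first two parts $J_{1/c}(a-c)$ is a $1$-Lipschitz map defined on all of $H$; composing with the bijection $v\mapsto\tfrac1c v$ shows that $a^{-1}\colon v\mapsto u$ is defined on all of $H$ and satisfies $\abs{a^{-1}v_1-a^{-1}v_2}\le\tfrac1c\abs{v_1-v_2}$, as claimed.
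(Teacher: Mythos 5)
The paper contains no proof of this statement: it is quoted directly from \cite[Theorem 1.3]{Morocsanu1988}, and Section 2.2 explicitly says that the well-known results on maximal monotone relations are stated without proof. So there is no internal argument to compare against; what can be said is that your proof is correct and is essentially the classical argument found in the cited literature. Your first step (expanding $\abs{w_1-w_2}^2$ and discarding the two nonnegative terms) simultaneously gives single-valuedness of $J_\lambda(a)$ and $\abs{J_\lambda(a)}_{\mathrm{Lip}}\le 1$, and is exactly the standard computation. Your reduction of the surjectivity of $1+\lambda a$ to the case $\lambda=1$ is sound: the maximality transfer for $\lambda a$ via the characterization of maximal monotonicity through monotonically related pairs is correct, and the remaining statement $R(1+b)=H$ is Minty's theorem, whose analytic core (the ball reformulation $\Re\langle u-x,u-(w-y)\rangle\le 0$, weak compactness, finite-intersection property, and the Debrunner--Flor lemma via Brouwer) you identify accurately; citing Minty at this point is entirely consistent with the paper's own standard, since the paper cites the whole theorem. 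Your treatment of the ``in particular'' clause is also right: with the paper's convention $a-c=\{(u,v);(u,v+cu)\in a\}$ one indeed gets $1+\tfrac1c(a-c)=\{(u,\tfrac1c v);(u,v)\in a\}$, whence $a^{-1}=J_{1/c}(a-c)\circ(v\mapsto\tfrac1c v)$ is everywhere defined and $\tfrac1c$-Lipschitz. One quibble concerning a remark you do not actually use: the map $u\mapsto J_1\bigl((1-\tfrac1\lambda)u+\tfrac1\lambda w\bigr)$ has Lipschitz constant $\abs{1-\tfrac1\lambda}$, so Banach's fixed-point theorem applies directly only for $\lambda>\tfrac12$; to propagate surjectivity to all $\lambda>0$ one must iterate through a chain of parameters, and calling the map merely ``nonexpansive'' would not suffice.
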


In Section \ref{MT}, in particular in the Theorems \ref{pro:inhom_Dirichlet} and \ref{pro:inhom_Neumann}, we want to deduce from the maximal monotonicity of a relation $a\subseteq H\oplus H$ in the Hilbert space $H$ the respective property for the relation $PaP^*\subseteq U\oplus U$, where $P: H\to U$ denotes the orthogonal projection onto a closed subspace $U\subseteq H$. The question whether a product of the type $BaB^*$, for some continuous $B$, is again maximal monotone is addressed in various publications, cf. e.g. \cite{Asakawa1987,Garcia2006,Pennanen2003,Robinson1999} and the references therein. In particular, in \cite{Robinson1999} conditions are given for the case of real Hilbert spaces. The author of \cite{Robinson1999} uses the theory of convex analysis in his proof. The methods carry over to the complex case. We gather some results concerning maximal monotone relations without proof. 
%

\begin{Sa}[{\cite[Theorem 4]{Robinson1999}}]\label{Thm: PAPMAX} Let $H$ be a Hilbert space, $U\subseteq  H$ a closed subspace and let $a\subseteq  H\oplus H$ be a maximal monotone relation. Moreover, assume that $[H]a=H$. Denote by $P: H\to U$ the orthogonal projection onto $U$. Then the relation $PaP^* \subseteq  U\oplus U$ is maximal monotone.
\end{Sa}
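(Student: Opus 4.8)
The plan is to prove that $PaP^*$ is maximal monotone by verifying the two defining properties separately, and the theorem of Minty that characterizes maximal monotonicity among monotone relations will be the crucial tool. Recall that a monotone relation $b \subseteq U \oplus U$ is maximal monotone if and only if $R(1+b) = U$, i.e. $(1+b)[U]=U$. So the strategy is: first show $PaP^*$ is monotone (this is the easy algebraic part), and then show the range condition $(1+PaP^*)[U]=U$ (this is where the real work lies).

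\textbf{Monotonicity.} First I would check that $PaP^*$ is monotone. Take $(u_1,v_1),(u_2,v_2)\in PaP^*$, so there exist $(x_j,y_j)\in a$ with $x_j=P^*u_j$ and $v_j=Py_j$ for $j\in\{1,2\}$. Then
\begin{equation*}
\Re\langle u_1-u_2, v_1-v_2\rangle = \Re\langle u_1-u_2, P(y_1-y_2)\rangle = \Re\langle P^*(u_1-u_2), y_1-y_2\rangle = \Re\langle x_1-x_2, y_1-y_2\rangle \geq 0,
\end{equation*}
using that $P^*$ is the adjoint of $P$ and that $a$ is monotone. This step is essentially formal and presents no difficulty.

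\textbf{Surjectivity of $1+PaP^*$.} This is the main obstacle. I need to show that for every $g\in U$ there is some $u\in U$ with $g\in (1+PaP^*)(u)$, i.e. there exists $(x,y)\in a$ with $x=P^*u$ and $g = u + Py$. Here the hypothesis $[H]a = H$ (the pre-set of all of $H$ under $a$ is everything) plays a decisive role, since it guarantees that $a$ is ``everywhere solvable'' in the first component. The natural approach following Robinson is to recast the problem variationally: since $P^*$ embeds $U$ isometrically into $H$ as the subspace $P^*[U]=U$ (viewing $P$ as projection, $P^*$ is the inclusion), the map $u\mapsto P^*u$ identifies $U$ with the closed subspace $U\subseteq H$, and the relation $PaP^*$ is the ``restriction'' of $a$ to that subspace in the sense of convex analysis. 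The key is to solve the inclusion $g \in u + P\,a(P^*u)$ by exploiting that the resolvent machinery of Theorem~\ref{Thm: Moro} applies to $a$ itself: since $a$ is maximal monotone, $J_1(a)=(1+a)^{-1}$ is defined on all of $H$ and is a $1$-Lipschitz contraction.

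The concrete plan is to reduce the range condition to an application of Minty's theorem combined with a fixed-point or projection argument. One decomposes $H = U\oplus U^\perp$ and writes the desired element of $a$ as a perturbation living over $U$; the condition $[H]a=H$ ensures that for any prescribed component in $U^\perp$ one can still find a point of $a$ with first coordinate in the affine set $P^*u + U^\perp$. I would set up the auxiliary monotone relation on $H$ obtained by adding the normal-cone term of $U$ (the subdifferential of the indicator function of $U$, whose graph over $U$ contributes $U^\perp$ in the second component) to $a$, and show that its sum with $a$ is maximal monotone; here the hypothesis $[H]a=H=\dom$ of the normal cone guarantees the standard domain-qualification condition (interiors of domains intersect) that makes the sum of two maximal monotone operators maximal monotone. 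Projecting the resulting solvability statement back onto $U$ via $P$ then yields exactly $(1+PaP^*)[U]=U$. The delicate point throughout is the qualification condition needed for the sum rule, and verifying that the projection of the solution of the ambient inclusion in $H$ gives a genuine element of $PaP^*$ rather than a larger relation; this is precisely the content carried over from the convex-analytic argument of \cite{Robinson1999} to the complex Hilbert space setting.
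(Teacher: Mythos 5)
You should know at the outset that the paper offers no proof of Theorem \ref{Thm: PAPMAX} to compare against: the statement is quoted from \cite[Theorem 4]{Robinson1999}, the authors remark that Robinson proves it for real Hilbert spaces by convex analysis (composition duality) and that ``the methods carry over to the complex case.'' Your route --- Minty's characterization combined with Rockafellar's sum theorem applied to $a+N_U$, where $N_U:=\{(x,z)\in H\oplus H;\ x\in U,\ z\in U^{\bot}\}$ is the normal cone of $U$ --- is therefore a genuinely different, and essentially sound, argument. Your monotonicity step is complete as written, and the hypothesis $[H]a=H$, i.e.\ $D(a)=H$, enters exactly where you place it: it yields the qualification $\operatorname{int}D(a)\cap D(N_U)=H\cap U\neq\emptyset$ required for $a+N_U$ to be maximal monotone ($N_U$ itself is maximal monotone, e.g.\ because $h=Ph+(1-P)h$ shows $(1+N_U)[U]=H$). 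Two points you leave implicit should be spelled out. First, the complex case: regard $H$ as a real Hilbert space with inner product $\Re\langle\cdot,\cdot\rangle$; the real-orthogonal complement of the complex subspace $U$ then coincides with $U^{\bot}$ (if $\Re\langle z,w\rangle=0$ for all $w\in U$, replacing $w$ by $\i w$ annihilates the imaginary part as well), so $N_U$ is as claimed and Minty/Rockafellar apply verbatim. Second, the projection step you flag as ``delicate'' is in fact routine: given $g\in U$, Minty's theorem for $a+N_U$ yields $x\in D(N_U)=U$, $y\in H$, $z\in U^{\bot}$ with $(x,y)\in a$ and $g=x+y+z$; applying $P$ and using $Pg=g$, $Px=x$, $Pz=0$ gives
\[
   g-x=Py\in PaP^{*}[\{x\}],
\]
since $x=P^{*}x$, so $(1+PaP^{*})[U]=U$, and the elementary converse half of Minty's theorem (if $u+v=x+y$ with $(x,y)\in PaP^{*}$ and $(u,v)$ monotonically related to $PaP^{*}$, then $0\leq\Re\langle u-x,v-y\rangle=-\abs{u-x}^{2}$) finishes the proof. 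The sentence about finding points of $a$ with first coordinate in the affine set $P^{*}u+U^{\bot}$ does no work and should be cut. As to what each approach buys: Robinson's composition-duality argument handles general compositions $BaB^{*}$ and delivers usable conditions beyond orthogonal projections, while your argument is shorter and more standard for the projection case but outsources its depth to Rockafellar's sum theorem, whose proof is itself convex-analytic --- comparable in depth, narrower in scope.
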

\begin{Fo}{\label{cor:c_max_mon}} Let $H$ be a Hilbert space, $U\subseteq H$ a closed subspace. Denote by $P:H\to U$ the orthogonal projection onto $U$. If $c>0$ and $a\subseteq H\oplus H$ is $c$-maximal monotone with $[H]a=H$, then $PaP^\ast$ is $c$-maximal monotone. 
\end{Fo}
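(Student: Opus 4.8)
The plan is to reduce the $c$-maximal monotonicity of $PaP^{\ast}$ to the ordinary maximal monotonicity statement of Theorem \ref{Thm: PAPMAX}, by exploiting the definition of $c$-maximal monotonicity and commuting the shift by $c$ past the projection. Recall that $a$ being $c$-maximal monotone means that $a-c$ is maximal monotone, and similarly $PaP^{\ast}$ will be $c$-maximal monotone precisely when $PaP^{\ast}-c$ is maximal monotone. So the goal reduces to identifying $PaP^{\ast}-c$ in terms of the maximal monotone relation $a-c$.

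First I would establish the algebraic identity
\begin{equation*}
 PaP^{\ast}-c = P(a-c)P^{\ast}.
\end{equation*}
To see this, note that since $U$ is a closed subspace and $P\colon H\to U$ is the orthogonal projection, we have $P^{\ast}\colon U\to H$ the canonical embedding, and crucially $PP^{\ast}=\id_{U}$. Thus for $(u,v)\in U\oplus U$ we compute: $(u,v)\in PaP^{\ast}-c$ iff $(u,v+cu)\in PaP^{\ast}$, which means there exists $w$ with $(P^{\ast}u,w)\in a$ and $Pw=v+cu$. Writing $w'=w-cP^{\ast}u$, so that $(P^{\ast}u,w')\in a-c$, we get $Pw'=Pw-cPP^{\ast}u=(v+cu)-cu=v$, giving $(u,v)\in P(a-c)P^{\ast}$; the reverse inclusion is symmetric. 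This verifies the claimed identity, and the only structural fact used is $PP^{\ast}=\id_{U}$.

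Next I would apply Theorem \ref{Thm: PAPMAX} to the relation $a-c$. Since $a$ is $c$-maximal monotone, by definition $a-c$ is maximal monotone. To invoke Theorem \ref{Thm: PAPMAX} I must also check the hypothesis $[H](a-c)=H$, i.e.\ that the domain of $a-c$ is all of $H$. But the domain is preserved under the shift by $c$: we have $(u,v)\in a-c$ iff $(u,v+cu)\in a$, so $[H](a-c)=[H]a=H$ by assumption. Theorem \ref{Thm: PAPMAX} then yields that $P(a-c)P^{\ast}$ is maximal monotone, and by the identity above this is exactly $PaP^{\ast}-c$; hence $PaP^{\ast}$ is $c$-maximal monotone, as required.

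The main obstacle, such as it is, lies entirely in the first step: getting the bookkeeping of the shift-past-projection identity right, in particular the correct use of $PP^{\ast}=\id_{U}$ to cancel the $cu$ term. Everything else is a direct citation of Theorem \ref{Thm: PAPMAX} together with the trivial observation that shifting a relation by a constant leaves its pre-set unchanged. No monotonicity inequality or convex-analytic machinery needs to be reproved, since the deep content is already packaged in Theorem \ref{Thm: PAPMAX}.
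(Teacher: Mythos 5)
Your proof is correct and complete: the identity $PaP^{\ast}-c=P(a-c)P^{\ast}$ (verified using $PP^{\ast}=\id_{U}$) together with the observation that the pre-set is invariant under the shift, $[H](a-c)=[H]a=H$, is exactly what is needed to reduce the statement to Theorem \ref{Thm: PAPMAX} applied to the maximal monotone relation $a-c$. The paper states this corollary without proof (it appears among the results the authors gather ``without proof''), and your reduction is precisely the intended argument, with the bookkeeping of the shift past the projection carried out correctly.
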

\begin{Le}\label{Le: Inh} Let $H$ be a Hilbert space, $a\subseteq  H\oplus H$ such that $a^{-1}:H\to H$ is Lipschitz-continuous. For $u_0,v_0\in H$ we $a-(u_0,v_0):=\{(x-u_0,y-v_0);(x,y)\in a\}$. Then $a-(u_0,v_0):H\to H$ is Lipschitz-continuous with the same Lipschitz-constant as $a^{-1}$.
\end{Le}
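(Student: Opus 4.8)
The plan is to make the inverse of the translated relation completely explicit and to recognise it as an affine reparametrisation of $a^{-1}$, whence the Lipschitz bound with identical constant follows at once. (I read the conclusion as the assertion that $\left(a-(u_0,v_0)\right)^{-1}:H\to H$ is Lipschitz-continuous, in parallel with the hypothesis placed on $a^{-1}$.) Concretely, I would unwind the definitions of inverse and translate: a pair $(w,z)$ lies in $\left(a-(u_0,v_0)\right)^{-1}$ exactly when $(z,w)\in a-(u_0,v_0)$, i.e.\ when there is some $(x,y)\in a$ with $z=x-u_0$ and $w=y-v_0$. Reading off $y=w+v_0$ and using that $a^{-1}$ is a genuine everywhere-defined single-valued map, the element $x$ is forced to equal $a^{-1}(w+v_0)$, so that $z=a^{-1}(w+v_0)-u_0$ is uniquely determined. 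This shows that $\left(a-(u_0,v_0)\right)^{-1}$ has full domain $H$, is single-valued, and satisfies
\[
  \left(a-(u_0,v_0)\right)^{-1}(w)=a^{-1}(w+v_0)-u_0\qquad(w\in H).
\]

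The Lipschitz estimate is then immediate. For $w_1,w_2\in H$ the additive constant $-u_0$ cancels on subtraction and the common shift $v_0$ of the arguments leaves their difference unchanged, giving $\left(a-(u_0,v_0)\right)^{-1}(w_1)-\left(a-(u_0,v_0)\right)^{-1}(w_2)=a^{-1}(w_1+v_0)-a^{-1}(w_2+v_0)$. Applying the Lipschitz bound for $a^{-1}$ and noting $\abs{(w_1+v_0)-(w_2+v_0)}=\abs{w_1-w_2}$ yields the claim with the identical constant.

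I expect no genuine obstacle. The only point demanding care is verifying that the inverse of the translate is again single-valued with full domain $H$, and this is precisely where the hypothesis on $a^{-1}$ is used. Because the two maps involved -- translation by $v_0$ on the input and by $u_0$ on the output -- are isometries of $H$, they cannot change the Lipschitz constant, so it is preserved exactly and not merely bounded.
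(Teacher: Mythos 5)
Your proposal is correct, and your reading of the (slightly garbled) statement --- that the conclusion concerns $\left(a-(u_0,v_0)\right)^{-1}$ --- is the intended one, consistent with how the lemma is invoked in Theorems \ref{pro:inhom_Dirichlet} and \ref{pro:inhom_Neumann}. The paper explicitly omits the proof as straightforward, and your argument, namely the explicit formula $\left(a-(u_0,v_0)\right)^{-1}(w)=a^{-1}(w+v_0)-u_0$ together with the observation that conjugation by translations is an isometric reparametrisation preserving the best Lipschitz constant exactly, is precisely that straightforward route.
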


The proof is straight-forward and we omit it.

\begin{rem}
 If $a\subseteq H\oplus H$ is maximal monotone, then $a-(u_0,v_0)$ is also maximal monotone (cf. \cite[Lemma 3.37]{Trostorff2011}).
\end{rem}


%

\section{Solution theory for elliptic boundary value problems}\label{MT}

\subsection{Abstract theorems}\label{AT}

 The first theorem comprises the essential observation of the whole article. It may be regarded as an abstract version of homogeneous boundary value problems for both the Dirichlet and the Neumann case.

\begin{Sa}[solution theory for homogeneous elliptic boundary value problems]\label{Thm: STHE}
Let $H_1,H_2$ be Hilbert spaces and let $A:D(A)\subseteq  H_1\to H_2$ be a densely defined closed linear operator and such that $R(A)\subseteq  H_2$ is closed. Define $B: D(A)\cap N(A)^\bot \to R(A): x\mapsto Ax$ and let $a\subseteq  R(A)\oplus R(A)$ such that $a^{-1}:R(A)\to R(A)$ is Lipschitz-continuous. Then for all $f\in H_{-1}(\abs{B})$ there exists a unique $u\in H_1(\abs{B})$ such that the following inclusion holds
\[
   A^*a A \ni (u, f).
\]
Here $A^\ast$ stands for the continuous extension of $D(A^\ast)\subseteq H_0(|A^\ast|+i)\to H_{-1}(|A|+i):\phi\mapsto A^*\phi$. Moreover, the solution $u$ depends Lipschitz-continuously on the right-hand side with Lipschitz constant $|a^{-1}|_{\mathrm{Lip}}$. \footnote{For a Lipschitz continuous mapping $f: X\to Y$ between two metric spaces $(X,d)$ and $(Y,e)$, we denote by \[\abs{f}_{\textnormal{Lip}}:= \inf\{ c\geq  0; \forall x_1,x_2 \in X: e(f(x_1),f(x_2))\leq  c d(x_1,x_2)\}\] the best Lipschitz constant.}\\
In other words, the relation $(B^* aB)^{-1} \subseteq  H_{-1}(\abs{B}) \oplus  H_1(\abs{B})$ defines a Lipschitz-continuous mapping with $\abs{(B^* aB)^{-1}}_{\textnormal{Lip}}= |a^{-1}|_\mathrm{Lip}$.
\end{Sa}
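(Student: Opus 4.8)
The plan is to transform the inclusion into one that is governed by the two unitary operators furnished by Theorem \ref{Le: unitary}, and then to read off existence, uniqueness and the Lipschitz estimate purely from unitarity. The whole argument splits into a (somewhat delicate) reduction from $A$ to $B$ and a (routine) algebraic core.

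First I would reduce the extrapolated problem $A^{*}aA \ni (u,f)$ with $u\in H_{1}(\abs{B})$ to the corresponding problem $B^{*}aB \ni (u,f)$. Unwinding the definitions, $(u,f)\in A^{*}aA$ means that there is some $w\in R(A)$ with $(Au,w)\in a$ and $A^{*}w=f$ in $H_{-1}(\abs{A}+i)$. Since $u\in H_{1}(\abs{B})=D(B)=D(A)\cap N(A)^{\bot}$, we have $Au=Bu\in R(A)=H_{0}(\abs{B^{*}})$, so the membership $(Au,w)\in a$ is literally $(Bu,w)\in a$. It then remains to match the two meanings of the adjoint on the middle variable $w\in R(A)$: by Lemma \ref{Le: Adjoint} the operators $A^{*}$ and $B^{*}$ agree on $D(A^{*})\cap N(A^{*})^{\bot}=D(B^{*})$, and by Remark \ref{rem:relation_Sobolev_chains}(b) the space $H_{-1}(\abs{B})$ embeds continuously into $H_{-1}(\abs{A}+i)$. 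Because $D(B^{*})$ is dense in $H_{0}(\abs{B^{*}})=R(A)$ and both extensions are continuous, the unitary extension $B^{*}\colon H_{0}(\abs{B^{*}})\to H_{-1}(\abs{B})$ from Theorem \ref{Le: unitary} is precisely the restriction of the extended $A^{*}$ to $R(A)$, and its values lie in $H_{-1}(\abs{B})$. Hence, for $u\in H_{1}(\abs{B})$ and $f\in H_{-1}(\abs{B})$, the inclusion $A^{*}aA \ni (u,f)$ is equivalent to $B^{*}aB \ni (u,f)$.

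Once this reduction is secured, the remaining argument is algebraic. Reading $B$, $a$ and $B^{*}$ as graphs and composing relations, one has
\[
  (B^{*}aB)^{-1}=B^{-1}\,a^{-1}\,(B^{*})^{-1}.
\]
By Theorem \ref{Le: unitary}, $B\colon H_{1}(\abs{B})\to H_{0}(\abs{B^{*}})=R(A)$ and $B^{*}\colon H_{0}(\abs{B^{*}})\to H_{-1}(\abs{B})$ are unitary, so $(B^{*})^{-1}$ maps $H_{-1}(\abs{B})$ isometrically onto $R(A)$ and $B^{-1}$ maps $R(A)$ isometrically onto $H_{1}(\abs{B})$. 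Since $a^{-1}\colon R(A)\to R(A)$ is an everywhere-defined single-valued Lipschitz map by hypothesis, the right-hand side is a genuine function from $H_{-1}(\abs{B})$ into $H_{1}(\abs{B})$, which yields both existence (take $u:=B^{-1}a^{-1}(B^{*})^{-1}f$) and uniqueness (single-valuedness) for every $f\in H_{-1}(\abs{B})$. Finally, pre- and post-composition with the surjective isometries $(B^{*})^{-1}$ and $B^{-1}$ leaves the best Lipschitz constant unchanged, so a direct estimate gives $\abs{(B^{*}aB)^{-1}}_{\mathrm{Lip}}\leq \abs{a^{-1}}_{\mathrm{Lip}}$, and inserting the inverse isometries gives the reverse inequality, whence equality. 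I expect the only real obstacle to be the first step, namely the careful identification of the two extrapolation constructions for the adjoint—confirming that the abstractly extended $A^{*}$ appearing in the statement coincides on $R(A)$ with the unitary extension of $B^{*}$ and takes values in $H_{-1}(\abs{B})$; the algebraic core is then immediate from unitarity.
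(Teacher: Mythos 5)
Your proof is correct and takes essentially the same route as the paper: the paper likewise reduces $A^{*}aA\ni(u,f)$ to $B^{*}aB\ni(u,f)$ (dismissed there as ``easy to see'') and then concludes from the factorization $(B^{*}aB)^{-1}=B^{-1}a^{-1}(B^{*})^{-1}$, Theorem \ref{Le: unitary} and the Lipschitz continuity of $a^{-1}$ on $R(A)$. Your careful matching of the extended adjoints via Lemma \ref{Le: Adjoint} and Remark \ref{rem:relation_Sobolev_chains} merely spells out the step the paper leaves implicit.
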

\begin{proof} It is easy to see that $(u,f)\in  A^*a A$ if and only if $(u,f)\in B^* aB$. Hence, the assertion follows from $(B^* aB)^{-1}=B^{-1}a^{-1}(B^*)^{-1}$, Theorem \ref{Le: unitary} and the fact that $a^{-1}$ is Lipschitz-continuous on $R(A)$.
\end{proof}


\begin{rem}\label{rem: Poincare} \begin{enumerate}[(a)]

\item Theorem \ref{Thm: STHE} especially applies in the case, where $a\subseteq R(A)\oplus R(A)$ is a $c$-maximal monotone relation for some constant $c>0$ by Theorem \ref{Thm: Moro}. The best Lipschitz-constant of $a^{-1}$ can then be estimated by $\frac{1}{c}$.  
 \item  In view of Theorem \ref{Thm: PAPMAX}, there are many maximal monotone relations $a$ such that their respective projections to $R(A)\oplus R(A)$ is maximal monotone. In order to apply Theorem \ref{Thm: STHE} one encounters the difficulty to show that $R(A)\subseteq H_2$ is closed. By the closed graph theorem, the closedness of $R(A)$ is equivalent to the following Poincare-type estimate 
\begin{equation}\label{eq:compact_resolvent}
 \exists c>0\ \forall x\in D(A)\cap N(A)^\bot: |x|_{H_1} \leq c|Ax|_{H_2}.
\end{equation}
A sufficient condition on the operator $A$  to have closed range is that the domain $D(A)$ is compactly embedded into the underlying Hilbert space $H_1$. Indeed, in this case, it is possible to derive an estimate of the form \eqref{eq:compact_resolvent} and therefore our solution theory is applicable.
 \item The latter theorem also gives a possibility to solve the \emph{inverse problem}, i.e., to determine the {}``coefficients'' $a\subseteq R(A)\oplus R(A)$ from the solution mapping ``$f\mapsto u$''. If $a$ is thought to be a $c$-maximal monotone relation in $H_2$ such that $[H_2]a=H_2$ then it is only possible to reconstruct the part $PaP^*$, where $P:H_2 \to R(A)$ denotes the orthogonal projection onto $R(A)$. 
 \end{enumerate}

\end{rem}

Now, we introduce an abstract setting for dealing with inhomogeneous boundary value problems. For this purpose we need a second operator $C$ which is in the Dirichlet-type case an extension and in the Neumann-type case a restriction of our operator $A$. For simplicity we just treat the case where $a\subseteq H_2\oplus H_2$ is $c$-maximal monotone and $[H_2]a=H_2$.

\begin{Sa}[solution theory for inhomogeneous Dirichlet-type problems]\label{pro:inhom_Dirichlet} 
Let $H_1,H_2$ be two Hilbert spaces and $A:D(A)\subseteq H_1 \to H_2$, $C:D(C)\subseteq H_1 \to H_2$ be two densely defined closed linear operators with $A\subseteq C$ and $R(A)\subseteq H_2$ closed. Furthermore, let $a\subseteq H_2\oplus H_2$  be $c$-maximal monotone for some $c>0$ with $[H_2]a=H_2$. Then for each $u_0 \in D(C), f\in H_{-1}(|B|)$ there is a unique $u\in H_1(|C|+i)$ with
\begin{align}{\label{eq:inhom_dirichlet}}
 A^*a C &\ni (u, f) \\
u-u_0 &\in H_1(|B|) \nonumber,
\end{align}
 where $B:D(A)\cap N(A)^\bot \subseteq N(A)^\bot \to R(A)$ is again the restriction of $A$. 
\end{Sa}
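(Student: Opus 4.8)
The plan is to reduce the inhomogeneous problem to the homogeneous solution theory of Theorem \ref{Thm: STHE} by means of the ansatz $u = u_0 + v$ with $v\in H_1(\abs{B})$. Since $v\in H_1(\abs{B}) = D(B)\subseteq D(A)$ and $A\subseteq C$, we have $Cv = Av = Bv$, so that $Cu = Cu_0 + Bv$. Writing $P\colon H_2\to R(A)$ for the orthogonal projection onto the closed subspace $R(A) = N(A^\ast)^\bot$, I would decompose everything along $H_2 = R(A)\oplus N(A^\ast)$ and encode the inhomogeneity $Cu_0$ into a shifted, then projected, relation on $R(A)$.

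First I would clarify the meaning of the constraint $A^\ast w = f$. For $w\in H_2 = H_0(\abs{A^\ast}+i)$ the extended operator $A^\ast$ maps into $H_{-1}(\abs{A}+i)$, and I claim it coincides with $B^\ast P$, where $B^\ast\colon R(A) = H_0(\abs{B^\ast})\to H_{-1}(\abs{B})$ denotes the unitary extension from Theorem \ref{Le: unitary} and $H_{-1}(\abs{B})\subseteq H_{-1}(\abs{A}+i)$ is the embedding of Remark \ref{rem:relation_Sobolev_chains}. Indeed, on the dense subspace $D(A^\ast)$ one has $A^\ast w = A^\ast Pw = B^\ast Pw$, since $(1-P)w\in N(A^\ast)$ and $Pw\in D(A^\ast)\cap N(A^\ast)^\bot = D(B^\ast)$; both sides are continuous from $H_2$ into $H_{-1}(\abs{A}+i)$, so the identity $A^\ast = B^\ast P$ holds throughout. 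Consequently, since $f\in H_{-1}(\abs{B})$ and $B^\ast$ is a bijection, the condition $A^\ast w = f$ is equivalent to $Pw = (B^\ast)^{-1}f =: w_1\in R(A)$.

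Next I would set up the projected shifted relation. Put $\tilde a := a - (Cu_0,0)$ and $a' := P\tilde a P^\ast\subseteq R(A)\oplus R(A)$. A direct computation gives $\tilde a - c = (a-c) - (Cu_0, -cCu_0)$, so by the Remark following Lemma \ref{Le: Inh} the maximal monotonicity of $a-c$ is inherited by $\tilde a - c$; hence $\tilde a$ is again $c$-maximal monotone, and clearly $[H_2]\tilde a = [H_2]a - Cu_0 = H_2$. Corollary \ref{cor:c_max_mon} then shows that $a' = P\tilde a P^\ast$ is $c$-maximal monotone, so by Theorem \ref{Thm: Moro} the inverse $(a')^{-1}\colon R(A)\to R(A)$ is a Lipschitz-continuous mapping (with constant at most $\frac1c$) defined on all of $R(A)$. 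Theorem \ref{Thm: STHE}, applied to the relation $a'$ on $R(A)$, therefore yields a unique $v\in H_1(\abs{B})$ with $B^\ast a' B\ni (v,f)$.

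It remains to translate this back. Given such $v$, there is $w'\in R(A)$ with $(Bv,w')\in a'$ and $B^\ast w' = f$; by definition of $a' = P\tilde a P^\ast$ the first relation means there is $w\in H_2$ with $(Cu_0 + Bv, w)\in a$ and $Pw = w'$, while $B^\ast w' = f$ forces $w' = w_1$. Setting $u := u_0 + v$, we get $Cu = Cu_0 + Bv$ and $A^\ast w = B^\ast Pw = B^\ast w_1 = f$, so $A^\ast a C\ni (u,f)$ with $u - u_0 = v\in H_1(\abs{B})$ and $u\in D(C) = H_1(\abs{C}+i)$. Conversely, any solution $u$ with $u-u_0 = v\in H_1(\abs{B})$ produces, via $w' := Pw$, a solution of $B^\ast a' B\ni (v,f)$, so the correspondence $u\leftrightarrow v$ is a bijection and the uniqueness statement of Theorem \ref{Thm: STHE} transfers. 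The hard part is the second paragraph: one has to make the passage to the extrapolation space $H_{-1}(\abs{B})$ rigorous and verify that the single identity $A^\ast = B^\ast P$ correctly converts the abstract constraint $A^\ast w = f$ into the projected condition $Pw = w_1$. Once this is in place, the verification that $a'$ is $c$-maximal monotone is a routine combination of the shift-invariance and Corollary \ref{cor:c_max_mon}.
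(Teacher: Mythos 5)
Your proposal is correct and takes essentially the same route as the paper's proof: the shift $\tilde{a}=a-(Cu_0,0)$, the projection $P\tilde{a}P^\ast$ onto $R(A)$ via Corollary \ref{cor:c_max_mon}, and the reduction to Theorem \ref{Thm: STHE} through the equivalence $u\leftrightarrow v=u-u_0$. Your explicit verification of the identity $A^\ast=B^\ast P$ on the extrapolation spaces (and of the shift-invariance of $c$-maximal monotonicity) merely spells out steps the paper's proof uses implicitly.
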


\begin{proof} Denote by $P:H_2\to R(A)$ the orthogonal projector onto $R(A)$. We set $\tilde{a}:=  a -(Cu_0,0)$, and obtain again a $c$-maximal monotone relation with $[H_2]\tilde{a}=H_2$. 
We show that $u$ is a solution of (\ref{eq:inhom_dirichlet}) if and only if $u-u_0\in H_1(|B|)$ is the solution of
\begin{equation}{\label{eq:cor_hom_dirichlet}}
 B^* P\tilde{a}P^\ast B  \ni (u-u_0, f).
\end{equation}
Indeed, if $u-u_0$ satisfies this inclusion, then we find $v\in H_2$ such that $(P^\ast B(u-u_0),v)\in \tilde{a}$ and $B^* Pv=f$. By definition of $\tilde{a}$ this implies $(P^\ast B(u-u_0)+Cu_0,v)\in a$ and since $P^\ast=1|_{R(A)}$ we get $(Cu,v)\in a$. This means $u\in H_1(|C|+i)$ solves the problem (\ref{eq:inhom_dirichlet}). If, on the other hand, $u\in H_1(|C|+i)$ satisfies (\ref{eq:inhom_dirichlet}), then we find $v\in H_2$ such that $(Cu,v)\in a$ and $B^* Pv=f$. Since $u-u_0 \in H_1(|B|)$ this implies $(B(u-u_0),v)\in \tilde{a}$ and hence $u-u_0$ solves the problem (\ref{eq:cor_hom_dirichlet}). Since (\ref{eq:cor_hom_dirichlet}) has a unique solution in $H_1(|B|)$ by Theorem \ref{Thm: STHE} and Corollary \ref{cor:c_max_mon}, we get the assertion.
\end{proof}
We may now show a continuity estimate. The proof for this estimate is adopted from \cite[Section 2.5]{Trostorff2011}.
\begin{Fo}[continuity estimate, Dirichlet case]\label{cor: cont_estimate} Let $a,A,C,B$ be as in Theorem \ref{pro:inhom_Dirichlet}. Then there exists $L >0$ such that for all $f,g\in H_{-1}(\abs{B})$, $u_0,v_0 \in D(C)$ with\footnote{Here, for a relation $w\subseteq G_1 \oplus G_2$ for Hilbert spaces $G_1,G_2$ the \emph{adjoint relation} $w^*$ is defined as
\[
    w^* := \{ ( u,-v) \in G_2\oplus G_1; (v,u) \in w \}^{\bot},
\]
where the orthogonal complement is with respect to the scalar product of $G_2\oplus G_1$.
} $C(u_0-v_0)\in [H_2]a^*$ and the respective solutions $u,v \in H_1(\abs{C}+i)$ of 
\[ A^* a C \ni (u,f), u-u_0 \in H_1(|B|) \text{ and } A^*a C\ni (v, g), v-v_0 \in H_1(|B|)\]
 the following estimate holds
\begin{multline*}
 \abs{u-v}_{H_1(\abs{C}+i)} \leq L\Big(\abs{f-g}_{H_{-1}(\abs{B})} + \abs{u_0-v_0}_{H_1(\abs{C}+i)} \\ + \inf\{ \abs{w_0}_{H_2}; (C(u_0-v_0),w_0)\in a^*\}\Big).
\end{multline*}
\end{Fo}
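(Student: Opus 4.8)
The plan is to build directly on the reformulation carried out in the proof of Theorem~\ref{pro:inhom_Dirichlet}. First I would unwind the inclusions $A^{*}aC\ni(u,f)$ and $A^{*}aC\ni(v,g)$ to extract witnesses $w_{u},w_{v}\in H_{2}$ with $(Cu,w_{u})\in a$, $(Cv,w_{v})\in a$ and $B^{*}Pw_{u}=f$, $B^{*}Pw_{v}=g$ in $H_{-1}(\abs{B})$, where $P\colon H_{2}\to R(A)$ is the orthogonal projection; these are precisely the objects produced there. Since $a$ is $c$-maximal monotone, testing the monotonicity inequality of $a-c$ with the two pairs gives the key lower bound
\[
 c\,\abs{C(u-v)}_{H_2}^{2}\leq \Re\langle C(u-v),w_{u}-w_{v}\rangle_{H_2},
\]
and the whole estimate will follow by bounding the right-hand side from above by the three prescribed quantities.

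Next I would split the argument of this inner product. Setting $\phi:=(u-v)-(u_{0}-v_{0})$, the hypotheses $u-u_{0},v-v_{0}\in H_{1}(\abs{B})$ give $\phi\in H_{1}(\abs{B})=D(B)$, and since $A\subseteq C$ one has $C(u-v)=B\phi+C(u_{0}-v_{0})$, so the right-hand side splits into $\Re\langle B\phi,w_{u}-w_{v}\rangle+\Re\langle C(u_{0}-v_{0}),w_{u}-w_{v}\rangle$. For the first summand I would use $B\phi\in R(A)$, so that only $P(w_{u}-w_{v})$ contributes, and invoke the duality pairing from Theorem~\ref{Le: unitary} and Remark~\ref{rem:ExDua} in the form $\langle w_{u}-w_{v},B\phi\rangle_{H_2}=\langle B^{*}P(w_{u}-w_{v}),\phi\rangle_{H_{-1}(\abs{B})\times H_{1}(\abs{B})}=\langle f-g,\phi\rangle$, which is dominated by $\abs{f-g}_{H_{-1}(\abs{B})}\abs{\phi}_{H_{1}(\abs{B})}$. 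The second summand is where the compatibility condition $C(u_{0}-v_{0})\in[H_{2}]a^{*}$ enters: choosing $w_{0}$ with $(C(u_{0}-v_{0}),w_{0})\in a^{*}$ and reading off the defining identity $\langle C(u_{0}-v_{0}),y\rangle=\langle w_{0},x\rangle$ valid for every $(x,y)\in a$, then applying it to $(Cu,w_{u})$ and $(Cv,w_{v})$ and subtracting, yields $\langle C(u_{0}-v_{0}),w_{u}-w_{v}\rangle=\langle w_{0},C(u-v)\rangle$, bounded by $\abs{w_{0}}_{H_2}\abs{C(u-v)}_{H_2}$.

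I would then turn these bounds into the final inequality. Writing $X:=\abs{C(u-v)}_{H_2}$ and using the unitarity of $B\colon H_{1}(\abs{B})\to R(A)$ together with $\abs{Cx}_{H_2}\leq\abs{x}_{H_1(\abs{C}+i)}$, one has $\abs{\phi}_{H_1(\abs{B})}=\abs{B\phi}_{H_2}\leq X+\abs{u_{0}-v_{0}}_{H_1(\abs{C}+i)}$, so the collected estimate reads $cX^{2}\leq(\abs{f-g}_{H_{-1}(\abs{B})}+\abs{w_{0}}_{H_2})X+\abs{f-g}_{H_{-1}(\abs{B})}\abs{u_{0}-v_{0}}_{H_1(\abs{C}+i)}$. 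Solving this quadratic inequality in $X$ gives a linear bound $X\lesssim\abs{f-g}_{H_{-1}(\abs{B})}+\abs{w_{0}}_{H_2}+\abs{u_{0}-v_{0}}_{H_1(\abs{C}+i)}$. To recover the full graph norm, I would exploit $\abs{u-v}_{H_1(\abs{C}+i)}^{2}=X^{2}+\abs{u-v}_{H_1}^{2}$ and estimate the base-norm part through $u-v=\phi+(u_{0}-v_{0})$ and the continuity of $B^{-1}$ guaranteed by the closedness of $R(A)$, i.e. $\abs{\phi}_{H_1}\leq\abs{B^{-1}}\,\abs{\phi}_{H_1(\abs{B})}$. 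Finally, since $w_{0}$ was an arbitrary element with $(C(u_{0}-v_{0}),w_{0})\in a^{*}$, passing to the infimum over such $w_{0}$ produces exactly the stated right-hand side, with $L$ depending only on $c$ and $\abs{B^{-1}}$.

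The main obstacle I anticipate is the inhomogeneous (boundary) term $\Re\langle C(u_{0}-v_{0}),w_{u}-w_{v}\rangle$: unlike the source contribution it cannot be controlled by the solution norm directly, and the entire role of the compatibility condition $C(u_{0}-v_{0})\in[H_{2}]a^{*}$ is to permit its rewriting, via the adjoint relation, as $\langle w_{0},C(u-v)\rangle$, thereby shifting it onto the quantity $X$ already under control. A secondary technical point is justifying the duality identity $\langle B^{*}Pz,\phi\rangle=\langle z,B\phi\rangle$ on the extrapolated space $H_{-1}(\abs{B})$, which I would first verify for $\phi\in D(B)$ and $z\in D(B^{*})\cap R(A)$ and then extend by continuity using the unitarity statements of Theorem~\ref{Le: unitary}.
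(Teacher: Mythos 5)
Your proposal is correct and takes essentially the same route as the paper's proof: the same witnesses $(Cu,w_u),(Cv,w_v)\in a$ with $B^*Pw_u=f$, $B^*Pw_v=g$, the same combination of the monotonicity of $a-c$ with the adjoint-relation identity to turn the boundary term $\Re\langle C(u_0-v_0),w_u-w_v\rangle$ into $\langle w_0,C(u-v)\rangle$, and the same norm bookkeeping between $H_1(\abs{B})$ and $H_1(\abs{C}+i)$. The only cosmetic differences are the direction in which the key identity is read and that you solve the resulting quadratic inequality in $\abs{Cu-Cv}_{H_2}$ directly, where the paper uses Young's inequality with small $\eps$.
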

\begin{proof} From the proof of Theorem \ref{pro:inhom_Dirichlet}, we know that $u$ satisfies
\[
 B^* P(a-(Cu_0,0)) P^\ast B\ni (u-u_0, f).
\]
Hence, there exists $x,y \in H_2$ such that
\[
  (P^*B(u-u_0) + Cu_0,x)=(Cu,x)\in a \text{ and }Px = (B^*)^{-1}f
\]
and the respective property for $y$, where $(u_0,f,u)$ is replaced by $(v_0,g,v)$. Let $L_1 >0$ such that for all $h\in H_{1}(\abs{B})$ we have $\abs{h}_{H_1(\abs{C}+i)} \leq L_1 \abs{h}_{H_1(\abs{B})}$.
Then we compute with the help of $P^*B(u-u_0)=Cu-Cu_0$:
\begin{align*}
   \abs{u-v}_{H_{1}(\abs{C}+i)}&\leq \abs{u-u_0-(v-v_0)}_{H_1(\abs{C}+i)}+\abs{u_0-v_0}_{H_1(\abs{C}+i)} \\
                               &\leq L_1 \abs{u-u_0-(v-v_0)}_{H_1(\abs{B})}+\abs{u_0-v_0}_{H_1(\abs{C}+i)}\\
                               & = L_1 \abs{B(u-u_0)-B(v-v_0)}_{H_0(\abs{B^*})}+\abs{u_0-v_0}_{H_1(\abs{C}+i)}\\
                               & = L_1 \abs{P^*B(u-u_0)-P^*B(v-v_0)}_{H_2} +  \abs{u_0-v_0}_{H_1(\abs{C}+i)}\\
                               & = L_1 \abs{Cu-Cv}_{H_2}+L_1\abs{Cu_0-Cv_0}_{H_2} +  \abs{u_0-v_0}_{H_1(\abs{C}+i)}.
\end{align*}
Thus, it suffices to estimate $\abs{Cu-Cv}_{H_2}$. To this end, let $w_0 \in H_2$ be such that $(C(v_0-u_0), w_0)\in a^*$. Using the monotonicity of $a-c$ and the definition of $a^*$, we conclude that
\begin{align*}
  &\Re \langle (B^*)^{-1}f-(B^*)^{-1}g, B(u-u_0)-B(v-v_0)\rangle \\
  &=\Re \langle Px-Py,   B(u-u_0)-B(v-v_0)\rangle \\
  &=\Re \langle x-y,   P^*B(u-u_0)-P^*B(v-v_0)\rangle \\
  &=\Re \langle x-y,   Cu-Cv\rangle+ \Re\langle x-y, Cv_0 - Cu_0\rangle \\
  &\geq c \abs{Cu-Cv}_{H_2}^2+ \Re \langle Cu-Cv, w_0\rangle \\
  &\geq c \abs{Cu-Cv}_{H_2}^2 - \abs{Cu-Cv}_{H_2}\abs{w_0}_{H_2}.
\end{align*}
Applying the Cauchy-Schwarz-inequality to the left-hand side, we get for $\eps >0$
\begin{align*}
 &c\abs{Cu-Cv}_{H_2}^2 \\
 &\leq \abs{f-g}_{H_{-1}(\abs{B})}\abs{B(u-u_0)-B(v-v_0)}_{H_0(\abs{B^*})}+\abs{w_0}_{H_2}\abs{Cu-Cv}_{H_2} \\
 &\leq \abs{f-g}_{H_{-1}(\abs{B})}\abs{Cu_0-Cv_0}_{H_2} \\ &\quad +\abs{f-g}_{H_{-1}(\abs{B})}\abs{Cu-Cv}_{H_2}+\abs{w_0}_{H_2}\abs{Cu-Cv}_{H_2}\\
 &\leq \abs{f-g}_{H_{-1}(\abs{B})}\abs{Cu_0-Cv_0}_{H_2}+\frac{1}{2\eps}(\abs{f-g}_{H_{-1}(\abs{B})}+\abs{w_0}_{H_2})^2+ \frac{\eps}{2}\abs{Cu-Cv}_{H_2}^2. 
\end{align*}
For $\eps >0$ small enough, this yields an estimate for $\abs{Cu-Cv}_{H_2}$ in terms of $\abs{w_0}_{H_2}$, $\abs{Cu_0-Cv_0}$ and $\abs{f-g}_{H_{-1}(\abs{B})}$. 
%
\end{proof}
\begin{rem}
 The norm in the above corollary can be interpreted as the ``graph-norm'' of $a^*$. We also shall briefly discuss two extreme cases of the above corollary. Since $a^*$ is a linear relation, $0 \in [H]a^*$. Thus, we have a continuous dependence result for varying right-hand sides and fixed boundary data. If $a$ is a bounded linear mapping, then $[H_2]a^*=H_2$. Therefore the condition $C(u_0-v_0)\in [H_2]a^*$ is trivially satisfied and the term $\inf\{ \abs{w_0}_{H_2}; (C(u_0-v_0),w_0)\in a^*\}$ can be estimated by $\Abs{a}\abs{C(u_0-v_0)}_{H_2}$, where $\Abs{a}$ is the operator norm of $a : H_2\to H_2$.
\end{rem}

\begin{Sa}[solution theory for inhomogeneous Neumann-type problems]\label{pro:inhom_Neumann}
Let $H_1,H_2$ be two Hilbert spaces and $A:D(A)\subseteq H_1 \to H_2,C:D(C)\subseteq H_1 \to H_2$ be two densely defined closed linear operators with $C\subseteq A$ and $R(A)$ closed in $H_2$. Furthermore, let $a\subseteq H_2\oplus H_2$  be $c$-maximal monotone for some $c>0$ with $[H_2]a=H_2$. 
Then for each $f\in H_{-1}(|C|+i),u_0\in H_2$ with $f-C^\ast u_0 \in H_{-1}(|B|)$\footnote{This means that we find an element $\xi\in H_{-1}(|B|)$ such that
\begin{equation*}
 (f-C^\ast u_0)(w)=\xi(w)\quad (w\in H_1(|B|)\cap H_1(|C|+i))
\end{equation*}
in the sense of Remark \ref{rem:ExDua}\eqref{rem:duality}} there exists a unique $u\in H_1(|B|)$ such that
\begin{equation}{\label{eq:inhom_neumann}}
 C^\ast aA  \ni (u, f), 
\end{equation}
in the sense that we find $v\in a[\{Au\}]$ such that (cp. Remark \ref{rem:ExDua}\eqref{rem:duality})
\begin{equation*}
f(w) = \langle v ,Cw \rangle_{H_2}=(C^\ast v) (w)\quad (w\in H_1(|B|)\cap H_1(|C|+i))
\end{equation*} and
\begin{equation}\label{eq:bdy_cond}
 A^*(v-u_0) (w) = 0 \quad \left(w\in (H_1(\abs{B})\cap H_1(\abs{C}+i))^{\bot_{H_1(\abs{B})}}\right).
\end{equation}
\end{Sa}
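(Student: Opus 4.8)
The plan is to mirror the reduction carried out for the Dirichlet case in Theorem \ref{pro:inhom_Dirichlet}, but dualized: I reduce the inhomogeneous Neumann problem to the homogeneous solution theory of Theorem \ref{Thm: STHE} by projecting $a$ onto $R(A)$ and absorbing the boundary datum $u_0$ into a shift of the relation. Write $P:H_2\to R(A)$ for the orthogonal projection and set $\hat a := PaP^\ast - (0,Pu_0)$ as a relation on $R(A)\oplus R(A)$. By Corollary \ref{cor:c_max_mon} the relation $PaP^\ast$ is $c$-maximal monotone (this is where $[H_2]a=H_2$ enters), and since a shift by a fixed pair preserves maximal monotonicity (the remark following Lemma \ref{Le: Inh}, applied to $PaP^\ast-c$), the relation $\hat a$ is again $c$-maximal monotone; hence $\hat a^{-1}:R(A)\to R(A)$ is a Lipschitz map by Theorem \ref{Thm: Moro}.

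The heart of the argument is to recast the three requirements on the pair $(u,v)$ — namely $(Au,v)\in a$, the weak equation $f(w)=\scp{v}{Cw}_{H_2}$ on $W:=H_1(\abs{B})\cap H_1(\abs{C}+i)$, and the Neumann condition \eqref{eq:bdy_cond} — as the single inclusion $(u,\xi_W)\in B^\ast\hat a B$. Let $\xi\in H_{-1}(\abs{B})$ be the element with $\xi(w)=(f-C^\ast u_0)(w)$ for $w\in W$, provided by hypothesis. For $w\in W$ one has $Cw=Aw=Bw$ (because $C\subseteq A$ and $w\in D(B)\subseteq D(A)\cap N(A)^\bot$), so on $W$ the identity $(B^\ast P(v-u_0))(w)=\scp{v}{Cw}_{H_2}-\scp{u_0}{Cw}_{H_2}$ shows that $B^\ast P(v-u_0)$ agrees with $\xi$ on $W$ precisely when the weak equation holds. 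Moreover, for every $w\in H_1(\abs{B})$ the duality of Remark \ref{rem:ExDua}\eqref{rem:duality} together with $Aw=Bw\in R(A)$ gives
\[
 (A^\ast(v-u_0))(w)=\scp{v-u_0}{Aw}_{H_2}=\scp{P(v-u_0)}{Bw}_{H_2}=(B^\ast P(v-u_0))(w),
\]
so \eqref{eq:bdy_cond} says exactly that $B^\ast P(v-u_0)$ annihilates $W^{\bot_{H_1(\abs{B})}}$. Since $H_1(\abs{B})=\overline{W}\oplus W^{\bot_{H_1(\abs{B})}}$ orthogonally, there is a unique functional $\xi_W\in H_{-1}(\abs{B})\cong H_1(\abs{B})'$ agreeing with $\xi$ on $W$ and vanishing on $W^{\bot_{H_1(\abs{B})}}$, namely $\xi_W:=\xi\circ\Pi$ with $\Pi$ the orthogonal projection of $H_1(\abs{B})$ onto $\overline{W}$. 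Thus the weak equation and \eqref{eq:bdy_cond} together are equivalent to $B^\ast P(v-u_0)=\xi_W$.

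With this reformulation I conclude as follows. Writing $z:=P(v-u_0)$, the condition $(Au,v)\in a$ together with $Bu=Au\in R(A)$ yields $(Bu,z)\in\hat a$, while $B^\ast P(v-u_0)=\xi_W$ reads $B^\ast z=\xi_W$; conversely, any solution of $(u,\xi_W)\in B^\ast\hat a B$ unravels, by the definition of $PaP^\ast$, to a pair $(u,v)$ with $(Au,v)\in a$ and $P(v-u_0)=z$. Hence the problem is equivalent to inverting $B^\ast\hat a B$ at $\xi_W\in H_{-1}(\abs{B})$, which by Theorem \ref{Thm: STHE} (applicable since $\hat a^{-1}$ is Lipschitz on $R(A)$) admits a unique solution $u\in H_1(\abs{B})$. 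Since $\xi_W$ depends only on $f$ and $u_0$, any pair satisfying the three conditions produces the same $\xi_W$ and therefore the same $u$, giving uniqueness; running the equivalence backwards produces the required $v\in a[\{Au\}]$, giving existence.

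I expect the main obstacle to be the second paragraph: carefully justifying the functional identities — in particular that the continuous extension $A^\ast(v-u_0)$, restricted to $H_1(\abs{B})$, coincides with $B^\ast P(v-u_0)$, which relies on $C\subseteq A$, on Remark \ref{rem:relation_Sobolev_chains}, and on the duality pairing of Remark \ref{rem:ExDua}\eqref{rem:duality} — and the well-definedness and uniqueness of $\xi_W$ via the orthogonal splitting of $H_1(\abs{B})$ along $W$. The monotonicity bookkeeping (that $\hat a$ stays $c$-maximal monotone after projection and shift) is routine given the cited results.
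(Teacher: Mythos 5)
Your proposal is correct and follows essentially the same route as the paper: you reduce to Theorem \ref{Thm: STHE} for the projected, shifted relation (your $\hat a = PaP^\ast-(0,Pu_0)$ coincides with the paper's $P\tilde a P^\ast$ where $\tilde a = a-(0,u_0)$), with right-hand side the unique functional agreeing with $f-C^\ast u_0$ on $H_1(\abs{B})\cap H_1(\abs{C}+i)$ and vanishing on its orthogonal complement in $H_1(\abs{B})$, which is exactly the paper's $\xi$. The only cosmetic deviation is that you construct $\xi_W=\xi\circ\Pi$ via the projection onto $\overline{W}$, whereas the paper first proves $W=H_1(\abs{B})\cap H_1(\abs{C}+i)$ closed in $H_1(\abs{B})$ and then picks $\xi$ directly; both steps, as well as the key identity $A^\ast(v-u_0)\restricted{H_1(\abs{B})}=B^\ast P(v-u_0)$, match the paper's argument.
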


\begin{proof} 
 Consider the following problem of finding $u\in H_1(\abs{B})$ such that
\begin{equation}{\label{eq:cor_hom_neumann}}
B^\ast P \tilde{a} P^\ast B \ni (u,\xi), 
\end{equation}
holds, where $\tilde{a}:=  a -(0,u_0)$ and $\xi\in H_{-1}(|B|)$ satisfies $\xi|_{H_1(|C|+i)}=(f-C^\ast u_0)|_{H_1(|B|)}$ with $\xi = 0$ on $(H_1(|B|)\cap H_1(|C|+i))^{\bot_{H_1(\abs{B})}}$.  Note that such a choice for $\xi$ is possible, since $H_1(\abs{B})\cap H_1(\abs{C}+i)\subseteq H_1(\abs{B})$ is closed. Indeed, $B$ and $C$ are both closed linear operators restricting $A$. Hence, $H_{1}(\abs{B})$ and $H_1(\abs{C}+i)$ are closed subspaces of $H_1(\abs{A}+i)$. Thus, the norms of the spaces $H_{1}(\abs{B})$ and $H_1(\abs{A}+i)$ are equivalent on $H_1(\abs{B})$ and therefore $H_1(\abs{B})\cap H_1(\abs{C}+i)\subseteq H_1(\abs{B})$ is closed.

We show that the problem (\ref{eq:cor_hom_neumann}) is equivalent to \eqref{eq:inhom_neumann}. Then the assertion follows from Theorem \ref{Thm: STHE} and Corollary \ref{cor:c_max_mon}. So let $u\in H_1(|B|)$ be a solution of (\ref{eq:cor_hom_neumann}). That means that we find $y\in H_2$ such that $(Bu,y)\in \tilde{a}$ and $B^\ast Py=\xi$. This, however, implies $(Bu,y+u_0)\in a$ and for $w\in H_1(|B|)\cap H_1(|C|+i)$ we compute
\begin{align*}
 \langle y+u_0 , Cw\rangle_{H_2} &= \langle y,Cw\rangle_{H_2}+(C^\ast u_0)(w) \\
&= \langle Py, Bw\rangle_{H_2}+(C^\ast u_0)(w) \\
&= (B^\ast Py)(w)+(C^\ast u_0)(w) \\
&= (f-C^\ast u_0)(w)+(C^\ast u_0)(w) \\
&= f(w) =\langle f,w\rangle_{H_1}.
\end{align*}
Moreover, for $w\in (H_1(\abs{B})\cap H_1(\abs{C}+i))^{\bot_{H_1(\abs{B})}}$, we have $y+u_0 \in a[\{Bu\}]$ and $B^*P(y+u_0-u_0)(w) = B^*Pv (w)=\xi(w)=0$.  Thus, $u$ is a solution of (\ref{eq:inhom_neumann}) in the stated sense. If, on the other hand, $u\in H_1(|B|)$ solves problem (\ref{eq:inhom_neumann}), then we find $v\in H_2$ with $(Bu,v)\in a$ and $(C^\ast v)|_{H_1(|B|)}=f|_{H_1(|B|)}$. It suffices to show that $f-C^\ast u_0$ and $B^\ast P(v-u_0)$ coincide on $H_1(|B|)\cap H_1(|C|+i)$. For this purpose let $w\in H_1(|C|+i)\cap H_1(|B|)$. Then we compute
\begin{align*}
(B^\ast P(v-u_0))(w) &= \langle P(v-u_0), Bw \rangle_{H_2} \\
&= \langle v,Bw\rangle_{H_2}-\langle u_0,Bw\rangle_{H_2} \\
&= \langle v,Cw\rangle_{H_2}-\langle u_0, Cw\rangle \\
&= (C^\ast v)(w)-(C^\ast u_0)(w) \\
&= (f-C^\ast u_0)(w).
\end{align*}
Hence, by the definition of $\tilde{a}$ we derive that $u$ solves (\ref{eq:cor_hom_neumann}) with $\xi=B^\ast P(v-u_0)\in H_{-1}(|B|)$.
\end{proof}
\begin{rem}\label{rem:weak_solution} 
  \begin{enumerate}[(a)] 
   \item The solvability condition may seem a bit awkward, but it is largely unavoidable if one wants to maintain uniqueness of the solution by showing the equivalence of the inclusions \eqref{eq:inhom_neumann} and \eqref{eq:cor_hom_neumann}. The very reason for this formulation is the fact that the spaces of linear functionals $H_{-1}(\abs{B})$ and $H_{-1}(\abs{C}+i)$ cannot be compared. However, one may also interpret condition \eqref{eq:bdy_cond} as the realization of the boundary condition in a distributional sense.
   \item It should be noted that the very weak formulation, how the inclusion (\ref{eq:inhom_neumann}) holds, may lead to unexpected solutions. Let for instance $f\in H_1,u_0=0$ and let the relation $a$ be given by $a=\mathrm{id}_{H_2}$. Then $f$ is in $H_{-1}(|B|)$ in the sense of Theorem \ref{pro:inhom_Neumann}. This is due to the Riesz representation theorem, since 
\begin{equation*}
  H_1(|B|) \ni v \mapsto \langle f,v\rangle_{H_1}
\end{equation*}
 defines a linear continuous functional on $H_1(|B|)$. Thus, we find $\eta \in H_1(|B|)$ such that
\begin{equation*}
 \langle |B|\eta , |B|v\rangle_{H_1}=\langle f,v\rangle_{H_1}
\end{equation*}
for every $v\in H_{1}(|B|)$. Hence, $\xi:= B^\ast B \eta$ satisfies $\xi=f|_{H_1(|B|)}$. So, according to Theorem \ref{pro:inhom_Neumann} we find a unique $u\in H_1(|B|)$ such that $C^\ast Bu|_{H_1(|B|)}=f|_{H_1(|B|)\cap H_1(|C|+i)}$ and $A^*Bu(w) = 0$ for all $w\in (H_1(\abs{B})\cap H_1(\abs{C}+i))^{\bot_{H_1(\abs{B})}}$. If $f\in N(A)$ then we get 
\begin{equation*}
 \forall v\in H_1(|C|+i)\cap H_1(|B|): \langle Bu, Cv\rangle_{H_2} =0.
\end{equation*}
Since we already know that the solution $u$ is unique, we conclude $u=0$. Thus $u=0$ solves the problem (\ref{eq:inhom_neumann}) for any right-hand side $f\in N(A)$. Since in applications this is not desirable one usually assumes $f \in N(A)^\bot$.
\end{enumerate}
\end{rem}
We also have a continuous dependency result.
\begin{Fo}[continuity estimate, Neumann case]\label{cor: cont_est_vn} Let $a,A,C,B$ be as in Theorem \ref{pro:inhom_Neumann}. Then there is $L >0$ such that for all $f,g\in H_{-1}(\abs{B})$, $u_0,v_0 \in H_2$ with $f-C^*u_0, g-C^*v_0 \in H_{-1}(\abs{B})$ and the respective solutions $u,v \in H_1(\abs{B})$ of 
\[ C^* a A\ni (u, f) \text{ and } C^* a A \ni(v, g) \]
 the following estimate holds
\begin{multline*}
 \abs{u-v}_{H_1(\abs{B})}   \leq \frac{1}{c}\sup\Big\{ \abs{((f-C^\ast u_0)-(g-C^\ast v_0))(w)}; \\ w\in H_1(\abs{B})\cap H_1(\abs{C}+i), \abs{w}_{H_1(\abs{B})}=1\Big\} + \frac{1}{c}\abs{Pu_0-Pv_0}_{H_0(\abs{B^*})}.
\end{multline*}
\end{Fo}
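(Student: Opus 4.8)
The plan is to run the monotonicity argument of Corollary~\ref{cor: cont_estimate}, but carried out inside the reformulation established in the proof of Theorem~\ref{pro:inhom_Neumann}. First I would recall from that proof that the two solutions solve the associated homogeneous-type inclusions $B^\ast P\tilde{a}_u P^\ast B \ni (u,\xi_u)$ and $B^\ast P\tilde{a}_v P^\ast B \ni (v,\xi_v)$, where $P:H_2\to R(A)$ is the orthogonal projection, $\tilde{a}_u := a-(0,u_0)$, $\tilde{a}_v := a-(0,v_0)$, and $\xi_u,\xi_v\in H_{-1}(\abs{B})$ are the functionals that agree with $f-C^\ast u_0$, resp.\ $g-C^\ast v_0$, on $H_1(\abs{B})\cap H_1(\abs{C}+i)$ and vanish on $(H_1(\abs{B})\cap H_1(\abs{C}+i))^{\bot_{H_1(\abs{B})}}$. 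Unwinding these inclusions gives witnesses $y_u,y_v\in H_2$ with $(Bu,y_u)\in\tilde{a}_u$, $(Bv,y_v)\in\tilde{a}_v$ and $B^\ast Py_u=\xi_u$, $B^\ast Py_v=\xi_v$; setting $w_u:=y_u+u_0$ and $w_v:=y_v+v_0$, the definition of $\tilde{a}$ yields $(Bu,w_u),(Bv,w_v)\in a$.

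Next I would use the $c$-maximal monotonicity of $a$: since $a-c$ is monotone, evaluating the monotonicity inequality at the pairs $(Bu,w_u),(Bv,w_v)\in a$ gives $c\abs{Bu-Bv}_{H_2}^2\le \Re\langle Bu-Bv, w_u-w_v\rangle_{H_2}$, and I split $w_u-w_v=(y_u-y_v)+(u_0-v_0)$. Because $Bu-Bv\in R(A)=H_0(\abs{B^\ast})$, the pairing against $u_0-v_0$ only sees the projected part, so Cauchy--Schwarz bounds $\Re\langle Bu-Bv, u_0-v_0\rangle_{H_2}$ by $\abs{Bu-Bv}_{H_2}\,\abs{Pu_0-Pv_0}_{H_0(\abs{B^\ast})}$; this produces the second summand of the claimed estimate.

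The main work is the term $\Re\langle Bu-Bv, y_u-y_v\rangle_{H_2}$. Again using $Bu-Bv\in R(A)$, this equals $\Re\langle Bu-Bv, P(y_u-y_v)\rangle$, and the duality identity from the proof of Theorem~\ref{pro:inhom_Neumann}, namely $(B^\ast Py)(w)=\langle Py, Bw\rangle_{H_2}$ in the sense of Remark~\ref{rem:ExDua}\eqref{rem:duality}, rewrites it as $\Re (\xi_u-\xi_v)(u-v)$. Here lies the one genuinely new difficulty compared with the Dirichlet case: $u-v$ belongs to $H_1(\abs{B})$ but not necessarily to $H_1(\abs{C}+i)$, whereas the supremum in the statement ranges only over the intersection. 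To handle this I would orthogonally decompose $u-v=w_1+w_2$ in $H_1(\abs{B})$ with $w_1\in H_1(\abs{B})\cap H_1(\abs{C}+i)$ and $w_2$ in the orthogonal complement; since $\xi_u-\xi_v$ was built to vanish on that complement and to coincide with $(f-C^\ast u_0)-(g-C^\ast v_0)$ on the intersection, we get $(\xi_u-\xi_v)(u-v)=\bigl((f-C^\ast u_0)-(g-C^\ast v_0)\bigr)(w_1)$, which together with $\abs{w_1}_{H_1(\abs{B})}\le \abs{u-v}_{H_1(\abs{B})}$ is bounded by $\abs{u-v}_{H_1(\abs{B})}$ times the supremum $S$ in the statement.

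Finally I would combine the three estimates into $c\abs{Bu-Bv}_{H_2}^2\le \abs{Bu-Bv}_{H_2}\bigl(S+\abs{Pu_0-Pv_0}_{H_0(\abs{B^\ast})}\bigr)$, divide by $\abs{Bu-Bv}_{H_2}$ (the case $Bu=Bv$ being trivial), and invoke the unitarity of $B:H_1(\abs{B})\to H_0(\abs{B^\ast})$ from Theorem~\ref{Le: unitary} to replace $\abs{Bu-Bv}_{H_2}$ by $\abs{u-v}_{H_1(\abs{B})}$; this gives the asserted inequality with $L=\tfrac1c$. I expect the main obstacle to be exactly the bookkeeping of the third paragraph: tracking in which space each vector lives, getting the anti-linearity convention of the duality pairing right, and justifying the passage from a general test element of $H_1(\abs{B})$ to one in the intersection through the vanishing of $\xi_u-\xi_v$ on the orthogonal complement.
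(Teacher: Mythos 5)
Your argument is correct and yields the stated estimate with the sharp constant $L=\frac{1}{c}$, but it proves the Lipschitz bound by hand where the paper simply cites it. The paper's own proof stays entirely inside the abstract machinery: after choosing $\xi,\eta$ exactly as you do, it rewrites \eqref{eq:cor_hom_neumann} as $PaP^*\ni(Bu,(B^*)^{-1}\xi+Pu_0)$ and then invokes Theorem \ref{Thm: PAPMAX} (via Corollary \ref{cor:c_max_mon}) for the $c$-maximal monotonicity of $PaP^*$, Theorem \ref{Thm: Moro} for $\abs{(PaP^*)^{-1}}_{\mathrm{Lip}}\leq\frac{1}{c}$, and Theorem \ref{Le: unitary} for the unitarity of $B:H_1(\abs{B})\to H_0(\abs{B^*})$ and of the extension of $B^*$, obtaining
\[
\abs{u-v}_{H_1(\abs{B})}\leq \frac{1}{c}\abs{\xi-\eta}_{H_{-1}(\abs{B})}+\frac{1}{c}\abs{Pu_0-Pv_0}_{H_0(\abs{B^*})}
\]
in three lines. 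You instead unwind the inclusions to witnesses $(Bu,w_u),(Bv,w_v)\in a$ and run the monotonicity inequality $c\abs{Bu-Bv}_{H_2}^2\leq\Re\langle Bu-Bv,w_u-w_v\rangle_{H_2}$ directly, mirroring the Dirichlet-case proof of Corollary \ref{cor: cont_estimate}. Your route buys two things: first, the estimate itself uses only the monotonicity of $a-c$, not the maximality of $PaP^*$, so Theorems \ref{Thm: PAPMAX} and \ref{Thm: Moro} are needed only for existence, not for the continuity bound; second, your third paragraph makes explicit a point the paper leaves implicit, namely that $\abs{\xi-\eta}_{H_{-1}(\abs{B})}$ coincides with the supremum over $w\in H_1(\abs{B})\cap H_1(\abs{C}+i)$, $\abs{w}_{H_1(\abs{B})}=1$, precisely because $\xi-\eta$ vanishes on $(H_1(\abs{B})\cap H_1(\abs{C}+i))^{\bot_{H_1(\abs{B})}}$ and the orthogonal projection onto the (closed) intersection is a contraction in $H_1(\abs{B})$ --- this step is also where your decomposition $u-v=w_1+w_2$ is exactly the right device, since $u-v$ need not lie in $H_1(\abs{C}+i)$. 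What the paper's route buys is brevity and the conceptually clean identification of the solution operator as the composition of the Lipschitz relation $(PaP^*)^{-1}$ with unitaries, which is the structural message of Theorem \ref{Thm: STHE}; your bookkeeping of the pairing (real parts absorbing the anti-linearity, and the identity $(B^*Pz)(w)=\langle Pz,Bw\rangle_{H_2}$ valid for all $w\in H_1(\abs{B})$, as used in the proof of Theorem \ref{pro:inhom_Neumann}) is handled correctly, so no gap remains.
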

\begin{proof}
Let $\xi,\eta \in H_{-1}(\abs{B})$ be such that $\xi|_{H_1(|C|+i)}=(f-C^\ast u_0)|_{H_1(|B|)}$ and $\eta|_{H_1(|C|+i)}=(g-C^\ast v_0)|_{H_1(|B|)}$ and $\xi=\eta=0$ on $(H_1(\abs{B})\cap H_1(\abs{C}+i))^{\bot_{H_1(\abs{B})}}$. Observe that \eqref{eq:cor_hom_neumann} is the same as to say
\[
  P(a -(0,u_0)) P^*  \ni (Bu,(B^*)^{-1}\xi).
\]
Hence, we get
\[
  PaP^* \ni (Bu,(B^*)^{-1}\xi+Pu_0).
\]
Invoking Theorems \ref{Thm: PAPMAX}, \ref{Thm: Moro} and \ref{Le: unitary}, we conclude that
\begin{align*}
   \abs{u-v}_{H_1(\abs{B})} & = \abs{Bu-Bv)}_{H_{0}(\abs{B^*})} \\
                            &\leq \frac{1}{c} \abs{(B^*)^{-1}\xi+Pu_0-(B^*)^{-1}\eta+Pv_0)}_{H_{0}(\abs{B^*})}\\
                            &\leq \frac{1}{c} \abs{\xi-\eta}_{H_{-1}(\abs{B})}+\frac{1}{c}\abs{Pu_0-Pv_0}_{H_{0}(\abs{B^*})}.\qedhere
\end{align*}
\end{proof}

\subsection{Examples}\label{ex}

In order to apply the results of Section \ref{AT} to concrete cases, we have to maintain the assumptions made on the abstract operator $A$, i.e., mainly, it is important to obtain the closedness of the range of $A$. We study examples, when this can be ensured. For the following let $n\in\N$ and let $\Omega\subseteq  \R^n$ be an open subset.

\subsubsection{Potential Theory}

\begin{Def} We define
\begin{align*}
  \widetilde{\diverg}_c :& \, C_c^\infty(\Omega)^n\subseteq  \bigoplus_{k=1}^nL_2(\Omega) \to L_2(\Omega) \\
                                            & \phi=(\phi_1,\ldots,\phi_n)^T\mapsto \sum_{k=1}^n \partial_k\phi_k,
\end{align*}
where $\partial_k$ denotes the derivative with respect to the $k$'th
variable ($k\in\{1,\ldots,n\}$). Furthermore, define
\begin{align*}
  \widetilde{\grad}_c :& \, C_c^\infty(\Omega)\subseteq  L_2(\Omega) \to \bigoplus_{k=1}^n L_2(\Omega) \\
                                            & \phi\mapsto (\partial_1\phi,\ldots,\partial_n\phi)^T.
\end{align*}
Moreover, let $\diverg:= -\left(\widetilde{\grad}_c\right)^*$, $\grad:= -\left(\widetilde{\diverg}_c\right)^*$, $\diverg_c:= -\grad^*$ and $\grad_c:= - \diverg^*$. 
\end{Def}

We like to state some examples, how the theory developed in Section \ref{AT} can be used to obtain a solution theory for inhomogeneous Dirichlet and Neumann type problems for the Laplacian. It should be noted that the theory does not require any regularity for the boundary of $\Omega$. Instead,  we assume that the boundary data is given as a function on the whole domain $\Omega$.\newline

\label{Ex: POTD} For the Dirichlet case, assume additionally that $\Omega$ is bounded in one dimension. Let $a\subseteq  L_2(\Omega)^n \oplus L_2(\Omega)^n$ be a $c$-maximal monotone relation for some $c>0$ such that $[L_2(\Omega)^n]a=L_2(\Omega)^n$. For every $f \in H_{-1}(\abs{\grad_c})$ and $u_0\in D(\grad)$ there is a unique solution $u \in H_{1}(\abs{\grad}+i)$ such that the inclusions
\begin{align*}
    \diverg a \grad  &\ni (u, f) \\
     u-u_0 &\in D(\grad_c)
\end{align*}
 are satisfied. Moreover, the solution $u$ depends Lipschitz-continuously on $f$ and $u_0$ in the sense of Corollary \ref{cor: cont_estimate}. Indeed, by our general reasoning in Theorem \ref{Thm: STHE}, it suffices to show the closedness of $R(\grad_c)$. The latter follows from the Poincare-inequality (cf. \cite[Satz 7.6, p.120]{Wloka1982}), cf. also Remark \ref{rem: Poincare}

 \[
     \Abs{ u }_{L_2(\Omega)} \leq  C \Abs{\grad_c u}\qquad (u \in H_1(\abs{\grad_c}))
 \]
 for some suitable constant $C>0$.
\newline
 
For the Neumann case, assume additionally that $\Omega$ is bounded, connected and satisfies the segment property. According to Rellich's theorem (cf. \cite[Theorem 3.8, p.24]{agmon2010lectures})  we obtain
\[
 \Abs{u}_{L_2(\Omega)}\leq C \Abs{\grad u} 
\]
 for all $u\in D(\grad)\cap N(\grad)^\bot$. Since $\Omega$ is connected, the null space of $\grad$ is given by the constant functions, i.e. $N(\grad)=\Lin\{1\}$. According to Remark \ref{rem: Poincare}  our solution theory applies and thus for every $f \in H_{-1}(|\grad_c|+i)$ and $u_0\in L_2(\Omega)^n$, satisfying
\begin{equation} \label{eq:compatibility_cond}
 f-\diverg u_0 \in H_{-1}(\abs{\grad|_{\{1\}^\bot}})
\end{equation}
in the sense of Theorem \ref{pro:inhom_Neumann}, we get the unique existence of $u \in H_{1}(\abs{\grad|_{\{1\}^\bot}})$ such that the inclusion
 \[
    \diverg a \grad  \ni (u, f)
 \]
 holds.

\begin{rem}
 In \cite[Theorem 4.22, p.78]{CioDon} we find for $f\in L_2(\Omega),u_0\in D(\diverg)$ a compatibility condition of the form 
\begin{equation*}
 \langle f-\diverg u_0,1\rangle =0.
\end{equation*}
In our framework, this is just the assumption to avoid contra-intuitive solutions $u$ (cp. Remark \ref{rem:weak_solution}).
\end{rem}

\subsubsection{Stationary Elasticity}

 We only consider in more detail the homogeneous Neumann-type problem, and refer to the abstract solution theory for inhomogeneous Dirichlet-type problems.

\begin{Def} Let $H_{\textnormal{sym}}(\Omega)$ be the vector space of $L_2(\Omega)$-valued symmetric $n\times n$ matrices, i.e.
 \[
    H_{\textnormal{sym}}(\Omega):= \{ \Phi\in L_2(\Omega)^{3\times 3}; \text{ for a.e.\ } x\in \Omega: \Phi(x)^T=\Phi(x)\}.
 \]
 Endowing $H_{\textnormal{sym}}(\Omega)$ with the inner product
 \[
    H_{\textnormal{sym}}(\Omega)\times H_{\textnormal{sym}}(\Omega) \ni (\Phi,\Psi)\mapsto \int_{\Omega} \trace(\Phi(x)^*\Psi(x))\dd x
 \]
 the space $H_{\textnormal{sym}}(\Omega)$ becomes a Hilbert space. 
 \end{Def}

\begin{Def} With
 \begin{align*}
    \widetilde{\Diverg_c} :& \,  H_{\textnormal{sym}}(\Omega)\cap C_c^\infty(\Omega)^{n\times n}\subseteq  H_{\textnormal{sym}}(\Omega)\to L_2(\Omega)^n\\
    											& (T_{jk})_{(j,k)\in \{1,\ldots,n\}^2}\mapsto \left(\sum_{k=1}^n\partial_kT_{jk}\right)_{j\in\{1,\ldots,n\}}
 \end{align*}
 and
 \begin{align*}
    \widetilde{\Grad_c} :& \, C_c^\infty(\Omega)^n\subseteq  L_2(\Omega)^{n}\to H_{\textnormal{sym}}(\Omega) \\
    										& (\Phi_k)_{k\in\{1,\ldots,n\}}\mapsto \frac12\left((\partial_k\Phi_j)_{(j,k)\in\{1,\ldots,n\}^2}+(\partial_j\Phi_k)_{(j,k)\in\{1,\ldots,n\}^2}\right),
  \end{align*}
  we define $\Diverg:= -\left(\widetilde\Grad_c\right)^*, \Grad:= - \left(\widetilde\Diverg_c\right)^*, \Diverg_c:= -\Grad^*$ and $\Grad_c:= -\Diverg^*$.
 \end{Def}

In the view of Remark \ref{rem: Poincare} we want to establish $\Grad$ in domains $\Omega$ such that $\Grad$ has a compact resolvent. In \cite{Weck1994} these domains were treated as domains having the elastic compactness property. Classically $\Omega$ is assumed to be bounded and satisfies the cone-property in order to apply Korn's inequality and the Poincare inequality. However in \cite{Weck1994} it was shown that these condition could be relaxed. So for instance, the domain $\Omega$ is allowed to have cusps of certain types (cf. \cite[Theorem 2]{Weck1994}), where Korn's inequality is not applicable. So let us assume that $\Omega$ has the elastic compactness property. Furthermore let $a\subseteq  H_{\textnormal{sym}}(\Omega)\oplus H_{\textnormal{sym}}(\Omega)$ be $c$-maximal monotone for some constant $c>0$ and assume that $[H_{\textnormal{sym}}(\Omega)]a=H_{\textnormal{sym}}(\Omega)$. Then for every $f\in H_{-1}(\abs{\Grad|_{N(\Grad)^\bot}})$ there exists a unique $u\in H_1(\abs{\Grad|_{N(\Grad)^\bot}})$ such that the inclusion
 \[
    \Diverg_c  a \Grad  \ni (u,f)
 \]
 holds.

\subsubsection{Electro- and Magneto-statics}

Our last example considers elliptic problems where the operator $A$ is given by $\curl$. These types of equations can be found in the field of electro- and magneto-statics (cf. \cite{MilaniPicard1988}). We restrict ourselves to the case of $n=3$.

\begin{Def} We define
\begin{align*}
   \widetilde\curl_c :& \, C_c^\infty(\Omega)^3 \subseteq  \bigoplus_{k=1}^3L_2(\Omega)\to \bigoplus_{k=1}^3L_2(\Omega) \\
                      & \phi=\begin{pmatrix}\phi_1\\ \phi_2\\ \phi_3\end{pmatrix} \mapsto \begin{pmatrix} \partial_2\phi_3-\partial_3\phi_2 \\ \partial_3\phi_1-\partial_1\phi_3 \\ \partial_1\phi_2-\partial_2\phi_1
                      \end{pmatrix}.
\end{align*} Define $\curl := \left(\widetilde\curl_c\right)^*$ and $\curl_c:=\curl^*$.
\end{Def}

We want to establish the operator $\curl$ in a suitable setting, such that $D(\curl)\hookrightarrow \hookrightarrow L_2(\Omega)^3$. This problem was studied for instance in \cite{Picard1984,Witsch1993,Picard2001}. In \cite{Witsch1993} it was shown that for bounded domains $\Omega$ satisfying the segment property and $\mathbb{R}^3\setminus \overline{\Omega}$ having the $p$-cusp-property for $p<2$ (cf. \cite[Definition 3]{Witsch1993}) the embeddings
\begin{equation*}
 H_1(|\curl|+i)\cap H_1(|\diverg_c|+i) \hookrightarrow L_2(\Omega)^3, \quad H_1(|\curl_c|+i)\cap H_1(|\diverg|+i) \hookrightarrow L_2(\Omega)^3
\end{equation*}
are compact. Following \cite{MilaniPicard1988}, we can decompose $H_1(|\curl_c|+i)$ and $H_1(|\curl|+i)$ in the following way
\begin{align*}
 H_1(|\curl|+i)&=\overline{\grad[H_1(|\grad|+i)]}\oplus (H_1(|\curl|+i)\cap N(\diverg_c)) \\
H_1(|\curl_c|+i)&= \overline{\grad_c[H_1(|\grad_c|+i)]}\oplus (H_1(|\curl_c|+i)\cap N(\diverg)). 
\end{align*}
Combining these two results, we obtain that
\begin{align*}
 \curl_{c,\sigma} :&\,  D(\curl_c)\cap N(\diverg) \subseteq N(\diverg) \to L_2(\Omega)^3 \\
 \curl_\sigma :& \, D(\curl)\cap N(\diverg_c) \subseteq N(\diverg_c) \to L_2(\Omega)^3 
\end{align*}
are densely defined closed operators with compactly embedded domains. Thus, by Remark \ref{rem: Poincare}, the problems
\begin{equation*}
 (\curl_{c,\sigma}|_{N(\curl_{c,\sigma})^\bot})^\ast a \curl_{c,\sigma}|_{N(\curl_{c,\sigma})^\bot}  \ni (u, f) 
\end{equation*}
and
\begin{equation*}
(\curl_\sigma|_{N(\curl_\sigma)^\bot})^\ast a \curl_\sigma|_{N(\curl_\sigma)^\bot} \ni (u,f)
\end{equation*}
are well-posed in the sense of Theorem \ref{Thm: STHE}. Here again, we assume that $a\subseteq L_2(\Omega)^3\oplus L_2(\Omega)^3$ is a $c$-maximal monotone relation for some $c>0$ with $[L_2(\Omega)^3]a=L_2(\Omega)^3$.

 
%

\bibliographystyle{plain}

\begin{thebibliography}{10}

\bibitem{agmon2010lectures}
S.~Agmon.
\newblock {\em Lectures on elliptic boundary value problems}.
\newblock AMS Chelsea Publishing Series. AMS Chelsea Pub., 2010.

\bibitem{Amann1972}
H.~Amann.
\newblock {A uniqueness theorem for nonlinear elliptic boundary value
  problems.}
\newblock {\em Arch. Ration. Mech. Anal.}, 44:178--181, 1972.

\bibitem{Amann1979}
H.~Amann and G.~Mancini.
\newblock {Some applications of monotone operator theory to resonance
  problems.}
\newblock {\em Nonlinear Anal., Theory Methods Appl.}, 3:815--830, 1979.

\bibitem{Amann1998}
H.~Amann and P.~Quittner.
\newblock {Elliptic boundary value problems involving measures: Existence,
  regularity, and multiplicity.}
\newblock {\em Adv. Differ. Equ.}, 3(6):753--813, 1998.

\bibitem{Asakawa1987}
H.~Asakawa.
\newblock {Restriction of maximal monotone operator to closed linear subspace.}
\newblock {\em TRU Math.}, 23(1):97--116, 1987.

\bibitem{Browder1963}
F.~Browder.
\newblock {Nonlinear elliptic boundary value problems.}
\newblock {\em Bull. Am. Math. Soc.}, 69:862--874, 1963.

\bibitem{Browder1965}
F.~Browder.
\newblock {Nonlinear elliptic boundary value problems. II.}
\newblock {\em Trans. Am. Math. Soc.}, 117:530--550, 1965.

\bibitem{Browder1966}
F.E. Browder.
\newblock {Nonlinear elliptic functional equations in nonreflexive Banach
  spaces.}
\newblock {\em Bull. Am. Math. Soc.}, 72:89--95, 1966.

\bibitem{CDC}
V.~Chiado'~Piat, G.~Dal~Maso, and A.~Defranceschi.
\newblock {G-convergence of monotone operators.}
\newblock {\em Ann. Inst. Henri Poincar{\'e}, Anal. Non Lin{\'e}aire},
  7:123--160, 1990.

\bibitem{CioDon}
D.~Cioranescu and P.~Donato.
\newblock {\em {An Introduction to Homogenization}}.
\newblock Oxford University Press, New York, 2010.

\bibitem{Courant1962}
R.~Courant and D.~Hilbert.
\newblock {\em {Methods of mathematical physics. Vol. II: Partial differential
  equations.}}
\newblock {New York-London: Interscience Publishers}, 1962.

\bibitem{EngNag}
K.~Engel and R.~Nagel.
\newblock {\em {One-Parameter Semigroups for Evolution Equations.}}
\newblock 194. Springer-Verlag, New York, Berlin, Heidelberg,, 1999.

\bibitem{Evans2010}
L.C. Evans.
\newblock {\em {Partial differential equations. 2nd ed.}}
\newblock {Graduate Studies in Mathematics 19. Providence, RI: American
  Mathematical Society.}, 2010.

\bibitem{Garcia2006}
Y.~Garcia, M.~Lassonde, and J.P. Revalski.
\newblock {Extended sums and extended compositions of monotone operators.}
\newblock {\em J. Convex Anal.}, 13(3-4):721--738, 2006.

\bibitem{gilbarg2001elliptic}
D.~Gilbarg and N.S. Trudinger.
\newblock {\em Elliptic partial differential equations of second order}.
\newblock Classics in mathematics. Springer, 2001.

\bibitem{Hungerbuhler1999}
N.~Hungerb{\"u}hler.
\newblock {Quasilinear elliptic systems in divergence form with weak
  monotonicity.}
\newblock {\em New York J. Math.}, 5:83--90, 1999.

\bibitem{Kato1976}
T.~Kato.
\newblock {\em {Perturbation theory for linear operators. 2nd ed.}}
\newblock {Grundlehren der mathematischen Wissenschaften. 132.
  Berlin-Heidelberg-New York: Springer-Verlag. XXI}, 1976.

\bibitem{MilaniPicard1988}
A.~Milani and R.~Picard.
\newblock Decomposition theorems and their application to non-linear electro-
  and magneto-static boundary value problems.
\newblock 1357:317--340, 1988.
\newblock 10.1007/BFb0082873.

\bibitem{Morocsanu1988}
G.~Moro\c{s}anu.
\newblock {\em {Nonlinear evolution equations and applications. Transl. from
  the Romanian by Gheorge Moro\c{s}anu.}}
\newblock {Mathematics and Its Applications: East European Series, 26.
  Dordrecht etc.: D. Reidel Publishing Company; Bucure\c{s}ti: Editura
  Academiei.}, 1988.

\bibitem{Pennanen2003}
T.~Pennanen, J.P. Revalski, and M.~Th\'era.
\newblock {Variational composition of a monotone operator and a linear mapping
  with applications to elliptic PDEs with singular coefficients.}
\newblock {\em J. Funct. Anal.}, 198(1):84--105, 2003.

\bibitem{Picard1984}
R.~Picard.
\newblock {An elementary proof for a compact imbedding result in generalized
  electromagnetic theory.}
\newblock {\em Math. Z.}, 187:151--164, 1984.

\bibitem{PicSoLa}
R.~Picard.
\newblock {Evolution Equations as operator equations in lattices of Hilbert
  spaces.}
\newblock {\em Glasnik Matematicki Series III}, 35:111--136, 2000.

\bibitem{Picard}
R.~Picard and D.~McGhee.
\newblock {\em {Partial Differential Equations: A unified Hilbert Space
  Approach}}, volume~55 of {\em Expositions in Mathematics.}
\newblock DeGruyter, Berlin, 2011.

\bibitem{Picard2001}
Rainer Picard, Norbert Weck, and Karl-Josef Witsch.
\newblock {Time-harmonic Maxwell equations in the exterior of perfectly
  conducting, irregular obstacles.}
\newblock {\em Analysis}, 21:231--263, 2001.

\bibitem{Robinson1999}
S.M. Robinson.
\newblock {Composition duality and maximal monotonicity.}
\newblock {\em Math. Programming, Ser. A}, 85:1--13, 1999.

\bibitem{Serrin1972}
J.~Serrin.
\newblock {A remark on the preceding paper of Amann.}
\newblock {\em Arch. Ration. Mech. Anal.}, 44:182--186, 1972.

\bibitem{Showalter1994}
R.E. Showalter.
\newblock {\em {Hilbert space methods for partial differential equations.
  Electronic reprint of the 1977 original.}}
\newblock {Electronic Journal of Differential Equations. Monograph. 1. San
  Marcos, TX: Southwest Texas State University}, 1994.

\bibitem{Trostorff2011}
S.~Trostorff.
\newblock {\em Well-posedness and causality for a class of evolutionary
  inclusions}.
\newblock PhD thesis, TU Dresden, 2011.
\newblock \url{http://nbn-resolving.de/urn:nbn:de:bsz:14-qucosa-78325}

\bibitem{Visik1963}
M.~I. Visik.
\newblock Quasi-linear strongly elliptic systems of differential equations of
  divergence form. (russian).
\newblock {\em Trudy Moskov. Mat. Ob.}, 12:125--184., 1963.

\bibitem{Weck1994}
N.~Weck.
\newblock {Local compactness for linear elasticity in irregular domains.}
\newblock {\em Math. Methods Appl. Sci.}, 17(2):107--113, 1994.

\bibitem{Witsch1993}
K.~J. Witsch.
\newblock {A remark on a compactness result in electromagnetic theory.}
\newblock {\em Math. Methods Appl. Sci.}, 16(2):123--129, 1993.

\bibitem{Wloka1982}
J.~Wloka.
\newblock {\em {Partielle Differentialgleichungen}}.
\newblock {B.G. Teubner Stuttgart}, 1982.

\end{thebibliography}

%

\end{document}